\newcommand{\F}{\mathbb F}
\newcommand{\R}{\mathbb R}
\newcommand{\qH}{\mathbb H}
\newcommand{\C}{\mathbb C}
\newcommand{\Z}{\mathbb Z}
\newcommand{\bls}{\backslash}
\newcommand{\bfB}{\mathbf{B}}
\newcommand{\bfM}{\mathbf{M}}
\newcommand{\bfH}{\mathbf{H}}
\newcommand{\bfG}{\mathbf{G}}
\newcommand{\bfK}{\mathbf{K}}
\newcommand{\cP}{\mathcal P}
\newcommand{\cI}{\mathcal I}
\newcommand{\cS}{\mathcal S}
\newcommand{\g}{\mathfrak}
\newcommand{\lag}{\langle}
\newcommand{\rag}{\rangle}
\newcommand{\eps}{\varepsilon}
\newcommand{\oline}{\overline}
\newcommand{\sseq}{\subseteq}
\newtheorem{theorem}{\textbf{Theorem}}[section]
\newtheorem{thm}[theorem]{\textbf{Theorem}}
\newtheorem{definition}[theorem]{\textbf{Definition}}
\newtheorem{prp}[theorem]{\textbf{Proposition}}
\newtheorem{lem}[theorem]{\textbf{Lemma}}
\newtheorem{example}[theorem]{\textbf{Example}}
\newtheorem{remark}[theorem]{\textbf{Remark}}
\newenvironment{rmk}{\begin{remark}\rmfamily\upshape}{\end{remark}}
\newenvironment{dfn}{\begin{definition}\rmfamily\upshape}{\end{definition}}
\newcommand{\into}{\hookrightarrow}
\newcommand{\gl}{\mathfrak{gl}}
\newcommand{\tr}{\mathrm{tr}}
\newcommand{\sPD}{\mathscr{PD}}
\newcommand{\sS}{\EuScript{S}}
\newcommand{\sD}{\mathscr{D}}
\newcommand{\sP}{\mathscr{P}}
\newcommand{\sR}{\mathscr{R}}
\newcommand{\Hom}{\mathrm{Hom}}
\newcommand{\sfe}{\mathsf{e}}
\newcommand{\POk}{ P^{\mathrm{ip}}}
\newcommand{\boldth}{\boldsymbol\theta}
\newcommand{\mapvWA}{\psi}
\newcommand{\sfE}{\mathsf{E}}
\newcommand{\Aa}{\underline{a}}
\newcommand{\Bb}{\underline{b}}
\newcommand{\Cc}{\underline{c}}
\newcommand{\Dd}{\underline{d}}
\newcommand{\Gact}{\circlearrowleft}
\newcommand{\lambul}{\lambda^\bullet}
\title[Quadratic Capelli operators and Okounkov polynomials]{Quadratic Capelli operators and Okounkov  polynomials}
\author{Siddhartha Sahi}
\address{Siddhartha Sahi, 
Department of Mathematics, Rutgers University, 110 Frelinghuysen Rd, Piscataway, NJ 08854--8019.
}
\email{sahi@math.rutgers.edu}
\author{Hadi Salmasian}
\address{Hadi Salmasian, Department of Mathematics and Statistics, University of Ottawa,
585 King Edward Ave, Ottawa, Ontario
Canada K1N 6N5.
}
\email{hsalmasi@uottawa.ca}
\begin{document}

\begin{abstract}
Let $Z$ be the symmetric cone of $r \times r$ positive definite Hermitian matrices over a real division algebra $\F$. Then $Z$ admits a natural family of invariant differential operators -- the 
\emph{Capelli operators} $C_\lambda$ -- indexed by partitions $\lambda$ of length at most $r$, whose eigenvalues are specializations of Knop--Sahi interpolation polynomials.

 In this paper we consider a double fibration  
 $Y \longleftarrow X \longrightarrow Z$ where $Y$ is the Grassmannian of $r$-dimensional subspaces of $\F^n $ with $n \geq 2r$. Using this  we construct a family of invariant differential operators $D_{\lambda,s}$ on $Y$ that we refer to as \emph{quadratic} Capelli operators. Our main result shows that the eigenvalues of the $D_{\lambda,s}$ are specializations of  Okounkov interpolation polynomials.\\[2mm]
\emph{Keywords:} 
Grassmannian manifolds, Harish-Chandra homomorphism,
Okounkov polynomials,
quadratic Capelli operators, symmetric cones.\\[2mm]
\emph{MSC 2010}: 05E05, 22E46\\

\noindent\textsc{R\'{e}sum\'{e}}. 
Soit $Z$ le c\^{o}ne sym\'{e}trique de 
matrices de tailles $r\times r$ hermitiennes positives sur une alg\`{e}bre de division r\'{e}elle $ \F $. Alors $Z$ admet une famille naturelle d'op\'{e}rateurs diff\'{e}rentiels invariants - les
\emph{Op\'{e}rateurs de Capelli} 
$C_\lambda $ - index\'{e}s par des partitions 
$\lambda$ de longueur au plus $r$, dont les valeurs propres sont des sp\'{e}cialisations de polyn\^{o}mes d'interpolation Knop-Sahi.

  Dans cet article, nous considé\'{e}rons une double fibration
  $ Y \longleftarrow X \longrightarrow Z $ 
  o\`{u} $ Y $ est la vari\'{e}t\'{e} grassmannienne des sous-espaces de dimension $r$ de $\F^n$ avec 
  $n\geq 2r$. En utilisant cela, nous construisons une famille d'op\'{e}rateurs diff\'{e}rentiels invariants $ D_{\lambda, s}$ sur $ Y $ que nous appelons op\'{e}rateurs 
 Capelli \emph{quadratiques}. Notre r\'{e}sultat principal montre que les valeurs propres des 
 $ D_{\lambda, s} $ sont des sp\'{e}cialisations de polyn\^{o}mes d'interpolation Okounkov.\\[2mm]
\emph{Mot cl\'{e}:} 
Vari\'{e}t\'{e}s grassmanniennes, homomorphisme de Harish-Chandra,
polyn\^{o}mes Okounkov, 
op\'{e}rateurs Capelli quadratiques, c\^{o}nes sym\'{e}triques.

\end{abstract}

\maketitle

\section{Introduction}
\label{sec:introduction}
Let $\F=\R,\,\C,\,\qH$ be a real division algebra. Fix  integers $r$ and $n$ such that 
$1\leq r\leq \frac{n}{2}$. Let 
$Y$ be the Grassmannian of
$r$-dimensional subspaces of $\F^{n}$, and  let $Z$ be the symmetric
cone of $r\times r$ positive definite Hermitian $\F$-matrices. Then one has a
double fibration
\begin{equation*}\label{eqdblfib}
\xymatrix{
& X \ar[dl]_{\varphi}\ar[dr]^{\psi} & \\
Y & & Z
}
\end{equation*}
where $X$ is the space of $n\times r$ matrices of $\F$-rank $r$. For $x\in X$,
$\varphi\left(  x\right)  $ is the  column space (or range) of $x$, while $\psi\left(
x\right)  :=x^{\dagger}x$, where $x^{\dagger}$ denotes the $\F$-Hermitian adjoint
of $x$. 

One can give another description of the  above structure in terms of the groups 
\begin{equation*}
\label{ObVEmBB} 
G_{m}:=\mathrm{GL}_m(\F),\quad K_{m}:=\mathrm{U}_m(\F):=\left\{g\in G_m\ :\ g^\dagger g=I_{m\times m}\right\}.
\end{equation*}
The groups
$K_n$ and $G_r$ act on $X$ by matrix multiplication on the left and right respectively, and the maps  
$X\overset{\psi}{\longrightarrow}Z$
and
$X\overset{\varphi}{\longrightarrow}Y$ 
are simply the corresponding quotient maps.
Moreover,
$X\overset{\varphi}{\longrightarrow}Y$ is a principal $G_r$-bundle, while
 $X\overset{\psi}{\longrightarrow}Z$
is a fibration whose fibers are isomorphic to the Stiefel manifold $K_n/K_{n-r}$. 
Also, since the actions of $K_n$ and $G_r$ on $X$ commute, 
it follows that $G_r$  acts
 on $Z$, and $K_n$ acts on
 $Y$.
In fact, $Y$ and $Z$ are symmetric spaces
for the latter actions.
More precisely, 
we have 
\[
Y\simeq K_n/(K_r\times K_{n-r})
\,\text{ and }\,Z\simeq G_r/K_r.
\]

The cone $Z$ is a symmetric space of type $A$, and admits an important basis of $G_{r}$-invariant
differential operators $C_\lambda$, 
indexed by partitions $\lambda\in \cP_r$, where 
\[\cP_r:=
\{(\lambda_1,\ldots,\lambda_r)\in\Z^r\,:\,\lambda_1\geq\cdots\geq\lambda_r\geq 0\}.
\] The operators $C_\lambda$ were first studied by the first author in \cite{Sahi}, and were referred to as  
\emph{Capelli operators}.
It is known that the spectrum of $C_\lambda$ is given by
specialization of  Knop--Sahi type $A$ interpolation polynomials 
\cite{Sahi}, \cite{KnopSahi}, \cite{wallach}. 

On the other hand, $Y$ is a compact symmetric space of type $BC$. In this paper, we
use the above  double fibration  to construct a 
family of $K_{n}$-invariant differential operators 
$D_{\lambda,s}$ 
 on $Y$  that 
correspond
to the  Capelli operators 
$C_\lambda$.
We call the operators $D_{\lambda,s}$ the \emph{quadratic Capelli operators} because they are obtained from $C_\lambda$ by  pullback of the quadratic map $\psi$.
Our main result proves that the spectrum of  $D_{\lambda,s}$ is
given by specialization of the \emph{Okounkov} type $BC$ interpolation  
polynomials $P_\lambda(x;\tau,\alpha)$ 
(see  \cite[Sec. 5.3]{Koornwinder} and \cite{Okounkov}).

To describe our main result precisely, we begin by introducing some notation. Set $K:=K_n$ and 
$M:=K_{r}\times K_{n-r}\subset K$, so that $Y\simeq K/M$.
The group $K$ acts by left translation on 
$C^\infty(Y)$,
the space of complex-valued smooth functions on $Y$. 
The operators $D_{\lambda,s}$
leave the subspace 
$C^\infty(Y)_{K\text{-finite}}^{}$
of  
$K$-finite vectors 
invariant. 
By standard results from the theory of compact symmetric spaces (for example, see \cite[Chap. V]{Helg}), 
$C^\infty(Y)_{K\text{-finite}}^{}$ decomposes as  a 
multiplicity-free direct sum of irreducible 
$M$-spherical $K$-modules, which are naturally parametrized by partitions $\mu\in\cP_r$.
Our next goal is to describe this parametrization. 
Let $\g k$ and $\g m$ denote the Lie algebras of $K$ and $M$. Fix a Cartan decomposition 
$\g k=\g m\oplus\g p$. Let
 $\g a\sseq\g p$ be a 
Cartan subspace,  
  and let
 $\g h$ be a Cartan subalgebra of $\g k$ such that $\g a \sseq \g h$.
Then 
$\g h=\g t\oplus\g a$, where $\g t:=\g h\cap\g m$. 
Set  
 $\g k_\C:=\g k\otimes_\R\C$,
$\g h_\C:=\g h\otimes_\R\C$,  
$\g a_\C:=\g a\otimes_\R\C$, and 
$\g t_\C:=\g t\otimes_\R\C$.
 The restricted root system $\Sigma:=\Sigma(\g k_\C,\g a_\C)$  
is of type $BC_r$. We choose a positive system $\Sigma^+\subset \Sigma$ and a
 basis $\sfe_1,\ldots,\sfe_r$ of $\g a^*_\C$ such that  the multiplicity $m_\alpha$ of every $\alpha\in\Sigma^+$ is 
given in terms of $n$, $r$, and  $d:=\dim\F$ in Table 1 below.
 \vspace{2mm}

\begin{center}
\begin{tabular}{cccc}
\bottomrule
$\alpha$\  & \ $\sfe_i$, $1\leq i\leq r$ \ & \ $\sfe_i\pm \sfe_j$, $1\leq i< j\leq r$\  & \ $2\sfe_i$, $1\leq i\leq r$\ \\
\toprule
$m_\alpha$ \ & $d(n-2r)$ &  $d$ & $d-1$. \\
\bottomrule \end{tabular}

\vspace{3mm}
Table 1.
\end{center}
We also choose a  positive system for the root system $\Delta:=\Delta(\g k_\C,\g h_\C)$ which is compatible with $\Sigma^+$.  
Let $\widetilde \mu\in\g h_\C^*$.
By the Cartan--Helgason Theorem, $\widetilde{\mu}$  is the highest weight of an irreducible $M$-spherical $K$-module if and only if 
\begin{equation}
\label{eqwtiMUU}
\widetilde\mu\big|_{\g t_\C}=0
\text{ and }\widetilde\mu\big|_{\g a_\C^{}}=\sum_{i=1}^r2\mu_i\sfe_i,
\text{ where }
\mu:=(\mu_1,\ldots,\mu_r)\in\cP_r.
\end{equation} 
\begin{rmk}
Assume that $\F=\R$. Then $K$ is disconnected,
 and if $K^\circ$ denotes the connected component of identity of $K$, then  
$M\cap K^\circ$ is also disconnected. Therefore the Cartan--Helgason Theorem as stated for instance  in
\cite[Cor. V.4.2]{Helg} does not apply immediately to the case $\F=\R$.
However, one can use the refinement of the 
Cartan--Helgason Theorem for the pair $(K^\circ,M\cap K^\circ)$, given in   \cite[Sec. 12.3.2]{GW09}, as well as  the description of irreducible representations of $K$ in terms of irreducible representations of $K^\circ$, given in \cite[Thm 5.5.23]{GW09}, to obtain the condition \eqref{eqwtiMUU}.
\end{rmk}
From now on, we denote the $M$-spherical $K$-module with highest weight $\widetilde \mu$ satisfying
\eqref{eqwtiMUU} by $V_\mu$. Therefore as $K$-modules,
\[
C^\infty(Y)_{K\text{-finite}}^{}\simeq\bigoplus_{\mu\in\cP_r}
V_\mu.
\]
The operator $D_{\lambda,s}$ acts on $V_\mu$ by the scalar \[
c_{\lambda,s}(\mu):=\mathrm{HC}(D_{\lambda,s})\left(\widetilde\mu|_{\g a_\C}^{}+\rho\right),
\] where  
$\rho:=\frac{1}{2}\sum_{\alpha\in\Sigma^+}\alpha$, 
$\widetilde \mu$ is the highest weight of $V_\mu$, and 
$
\mathrm{HC}:\mathbf{D}_{K}(Y)\to \sP(\g a^*_\C)^{\mathsf W}
$ is the 
Harish-Chandra homomorphism from the algebra 
$\mathbf{D}_{K}(Y)$ of $K$-invariant differential operators on $Y$ onto the algebra of  polynomials on $\g a^*_\C$ that are invariant under the action of the 
restricted Weyl group $\mathsf W$. 

We now recall the definition of the Okounkov polynomials $P_\lambda(x;\tau,\alpha)$. 
Let $\Bbbk:=\C(\tau,\alpha)$ denote the field of rational functions in $\tau$ and $\alpha$.
Let $\delta,\mathbbm 1\in\cP_r$ and $\varrho_{\tau,\alpha}\in\Bbbk^r$ be defined by 
\begin{equation}
\label{rhotaualph}
\delta:=(r-1,\ldots,0),\quad
\mathbbm 1:=(1,\ldots,1),\quad 
\varrho_{\tau,\alpha}:=\tau\delta+\alpha\mathbbm 1.
\end{equation}
For $\lambda\in\cP_r$, we define $|\lambda|:=\sum_{i=1}^r\lambda_i$.
 Up to a scalar,  
$P_\lambda(x;\tau,\alpha)\in\Bbbk[x_1,\ldots,x_r]$ is the unique polynomial of degree $2|\lambda|$
  which is invariant under  permutations and sign changes  of $x_1,\ldots,x_r$, and satisfies
\[
P_\lambda(\mu+\varrho_{\tau,\alpha};\tau,\alpha)=0
\]
 for every $\mu\in\cP_r$ such that $|\mu|\leq |\lambda|\text{ and }\mu\neq \lambda$
(for more details, see Section \ref{sec:van-uniqn}).

Recall that $d:=\dim\F$. Let
$\mathbbm i:\C^r\to\g a_\C^*$ 
be the linear  map defined by $\mathbbm i(\sfe^i):= 2\sfe_i$ for $1\leq i\leq r$, where
$\sfe^1,\ldots,\sfe^r$ are the standard basis vectors of $\C^r$
 (therefore $\mathbbm i(\cP_r)$ is  the set of restrictions to $\g a_\C$ of highest weights of $M$-spherical $K$-modules). Set \[
\varrho:=\mathbbm i^{-1}(\rho).
\] 
A simple calculation yields
\begin{equation}
\label{formulRho}
\varrho=(\varrho_1,\ldots,\varrho_r)\,\text{ where }\,
\varrho_i:=\frac{dn}{4}-\frac{1}{2}-\frac{d(i-1)}{2}\,\text{ for every }1\leq i\leq r.
\end{equation}

We are now ready to state our main theorem. 
\begin{thm}
\label{maintheorem}
For every  $\lambda,\mu\in\cP_r$ and every $s\in\C$, the operator $D_{\lambda,s}$ acts on $V_\mu$ by the scalar 
\begin{equation}
\label{scallll}
c_{\lambda,s}(\mu):=\gamma_\lambda
P_\lambda\left(\mu+\varrho;\frac{d}{2},s-\varrho_1\right),
\end{equation}
where $\gamma_\lambda$ is a certain explicit constant, defined in \eqref{defConstgamla} below.
\end{thm}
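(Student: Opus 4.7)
The plan is to prove the formula by characterizing $c_{\lambda,s}(\mu)$ through the vanishing uniqueness property of Okounkov polynomials recalled in Section \ref{sec:van-uniqn}. Concretely, I would show that the map $\mu \mapsto c_{\lambda,s}(\mu)/\gamma_\lambda$ extends to a polynomial $Q_{\lambda,s}(x_1,\ldots,x_r) \in \Bbbk[x_1,\ldots,x_r]$ of degree at most $2|\lambda|$ that is invariant under the restricted Weyl group $\mathsf{W}$ (of type $BC_r$, acting by permutations and sign changes of the $x_i$) and that satisfies $Q_{\lambda,s}(\nu+\varrho) = 0$ for every $\nu \in \cP_r$ with $|\nu|\leq|\lambda|$ and $\nu\neq\lambda$. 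The uniqueness part of the Okounkov characterization then forces $Q_{\lambda,s}(x) = P_\lambda(x; d/2, s-\varrho_1)$, and the constant $\gamma_\lambda$ is pinned down by a separate leading-symbol calculation.

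The polynomiality, $\mathsf{W}$-invariance, and degree bound would emerge from an explicit description of $D_{\lambda,s}$ via the double fibration. Since $\varphi: X \to Y$ is a principal $G_r$-bundle, $K_n$-finite sections of the line bundle on $Y$ attached to a character $\chi_s: G_r \to \C^\times$ correspond to $K_n$-finite, $\chi_s$-equivariant functions on $X$, and the quadratic map $\psi(x) = x^\dagger x$ lets one realize $D_{\lambda,s}$ as the operator on $Y$ induced by the pullback $\psi^* C_\lambda$ restricted to this equivariant subspace. Under this description, an $M$-spherical vector in $V_\mu$ lifts to a function on $X$ whose $Z$-content is a $K_r$-spherical vector in a $G_r$-module whose highest weight is, up to a shift determined by $s$, essentially $2\mu$. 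Consequently the Knop--Sahi eigenvalue formula for $C_\lambda$ (a polynomial of degree $|\lambda|$ in the partition) translates into a polynomial of degree $2|\lambda|$ in $\mu+\varrho$, with the doubling reflecting the quadratic nature of $\psi$ and the identifications $\tau = d/2$, $\alpha = s-\varrho_1$ forced by the multiplicity data in Table 1 and formula \eqref{formulRho}. The $\mathsf{W}$-invariance is automatic from $\mathrm{HC}(D_{\lambda,s}) \in \sP(\g a_\C^*)^{\mathsf{W}}$.

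The main obstacle is establishing the vanishing property, which I expect to descend from the corresponding vanishing of the Knop--Sahi interpolation polynomials: the scalar by which $C_\lambda$ acts on the $K_r$-spherical $G_r$-module labeled by $\kappa \in \cP_r$ vanishes whenever $|\kappa|\leq |\lambda|$ and $\kappa\neq \lambda$. Combined with the correspondence $\nu \leftrightarrow 2\nu + (\text{shift})$ between $M$-spherical $K_n$-modules on $Y$ and $K_r$-spherical $G_r$-modules on $Z$, this should yield $c_{\lambda,s}(\nu) = 0$ for every $\nu \in \cP_r$ with $|\nu|\leq|\lambda|$ and $\nu\neq\lambda$, provided the intermediate space $C^\infty(X)$ does not introduce extraneous $K_n$-types that obstruct the descent. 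The delicate point is tracking how the Stiefel-manifold fibers of $\psi$ interact with $K_n$-finite lifts: one must verify that a nonzero $V_\nu \sseq C^\infty(Y)_{K\text{-finite}}^{}$ with $|\nu|<|\lambda|$ does not spuriously acquire contributions from the angular directions. Finally, the explicit constant $\gamma_\lambda$ of \eqref{defConstgamla} should fall out of a leading-symbol computation, comparing the top-degree part of $\mathrm{HC}(D_{\lambda,s})$ -- inherited from that of $C_\lambda$ via the radial part of $\psi^*$ -- against the leading coefficient of $P_\lambda(x; d/2, s-\varrho_1)$.
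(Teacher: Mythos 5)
Your high-level skeleton --- polynomiality and degree bound from the Harish-Chandra homomorphism, $\mathsf W$-invariance, a vanishing property, then uniqueness of the Okounkov polynomial to conclude up to a constant, and a separate leading-term computation for the constant --- matches the paper's architecture. But there is a genuine gap in the central step, and the paper's introduction explicitly warns against exactly the route you propose.

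You expect the vanishing $c_{\lambda,s}(\nu)=0$ for $|\nu|\leq|\lambda|$, $\nu\neq\lambda$ to ``descend from the corresponding vanishing of the Knop--Sahi interpolation polynomials'' via the quadratic map $\psi$ and a correspondence $\nu\leftrightarrow 2\nu+(\text{shift})$. This does not work. A pullback by $\psi$ does not transport the type-$A$ vanishing data on $Z$ to the type-$BC$ data on $Y$ in any straightforward way: the order-based argument that suffices for the Knop--Sahi case fails here, and --- as noted in Remark \ref{RHORHO} --- the natural $\rho$-shift $\varrho$ of the symmetric space $Y$ is \emph{not} the shift $\varrho_{\tau,\alpha}$ appearing in the Okounkov vanishing condition; they are related by $\varrho_{r-i+1}+(\varrho_{\tau,\alpha})_i=s$. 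The actual argument is indirect. Because $d_\lambda(x,s)$ is even-symmetric, vanishing at $x$ is equivalent to vanishing at $\tilde x=(-x_r,\ldots,-x_1)$; exploiting this and specializing $s=-m$ for integers $m\geq\lambda_1$ converts the required vanishing into the statement $c_{\lambda,-m}(\eta)=0$ with $\eta=(m-\nu_r,\ldots,m-\nu_1)$. That statement is then proved by realizing $V_\eta$ inside $\sP_{\mathbf m}(W)$ (Proposition \ref{propR^mKdec}), observing that $D_\lambda$ factors $\bfK_n$-equivariantly through $\sP_\chi(W)$ for $\chi=(m-\lambda_r,\ldots,m-\lambda_1)$, and then invoking a branching-rule constraint (Littlewood--Richardson when $\F=\C$, the Littlewood Restriction Theorem when $\F=\R,\qH$; Propositions \ref{thmLIL} and \ref{VetaPm+l*}). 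None of this representation-theoretic machinery is present in your proposal, and your guarded remark about whether ``the Stiefel-manifold fibers of $\psi$ ... spuriously acquire contributions from the angular directions'' is precisely where the naive descent breaks down.

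A smaller but still real gap is the constant. A generic ``leading-symbol'' comparison does not directly produce \eqref{defConstgamla}. The paper instead sums over $|\lambda|=m$ to form $D^{(m)}$, identifies its top-order part with $\frac{1}{m!}(D^{(1)})^m$, computes $D^{(1)}$ explicitly in terms of $\Omega_{\g k}$, $\Omega_{\g g}$, and the Euler operator (Proposition \ref{prpAppndx} and the Appendix), and then uses Stanley's identity for Jack polynomials \cite[Prop. 2.3]{StanleyP} to separate the contributions of the individual $c_{\lambda,0}$ and read off $\gamma_\lambda$. Without the reduction to $D^{(1)}$ and the Jack-polynomial expansion of the power sum, there is no route from ``compare leading terms'' to the explicit product formula in \eqref{defConstgamla}.
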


We now briefly sketch the  strategy behind the proof of  Theorem \ref{maintheorem}. We first show that the equality \eqref{scallll} holds up to a scalar multiple. In view of the characterization of the Okounkov polynomials $P_\lambda$, it suffices to prove that $c_{\lambda,s}(x-\varrho)$ is a polynomial in $x$ and $s$ which has the same degree and vanishing property as
 $P_\lambda(x;\tau,\alpha)$ for $\tau:=\frac{d}{2}$ and $\alpha:=s-\varrho_1$ (see 
Proposition \ref{prpuniqPip} and Proposition \ref{PrpVanCondd}).
It will be seen that verifying the pertinent vanishing property  can be reduced to a slightly weaker one, that is, to show that  
 the operators $D_\lambda:=D_{\lambda,0}$ vanish on certain  $V_\mu\subset C^\infty(Y)_{K\text{-finite}}^{}$. 
Up to this point, the strategy is the same as the one in the case of Knop--Sahi type $A$ polynomials. However, the proof of the vanishing property of  
the operators $D_\lambda$ is more subtle than the type $A$ case, in that it does \emph{not} follow from a direct reasoning that is based on orders of differential operators. In addition, the \emph{$\rho$-shift} of the symmetric space $Y$ and  $\varrho_{\tau,\alpha}$ are not identical. Rather, they are related to each other as in Remark \ref{RHORHO}.

To overcome these difficulties, we need to use the fact that $c_{\lambda,s}(x-\varrho)$ is symmetric under permutations and sign changes of its variables, and therefore $c_{\lambda,s}(x-\varrho)=0$ for $x:=(x_1,\ldots,x_r)$ if and only if 
$c_{\lambda,s}(\tilde x-\varrho)=0$ 
for $\tilde x:=(-x_r,\ldots,-x_1)$.
The latter observation results in an equivalent form of the vanishing condition for $c_{\lambda,s}$, which is verified in the proof of  
Proposition \ref{PrpVanCondd} using differential operator techniques and branching rules. 
The branching rule from $G_n$ to $K_n$ that we need in the proof of the vanishing property of $D_\lambda$ is the Littlewood--Richardson Rule for $\F=\C$ and
the Littlewood Restriction Theorem for $\F=\R,\qH$. As a result, some of our proofs are divided into two cases, but they lead to uniform statements. 

The last step in the proof of our main result is to determine the scalar $\gamma_\lambda$ that relates the two sides of 
\eqref{scallll}. To this end, we use the fact that the top degree  homogeneous term of $P_\lambda(x;\tau,\alpha)$ is a Jack polynomial. Using a trick which relies on an identity for Jack polynomials (see \cite[Prop. 2.3]{StanleyP}), calculation of $\gamma_\lambda$ for general $\lambda$ is reduced to when $\lambda$ corresponds to the $1\times 1$ Young diagram. Even this special case requires a rather intricate computation to express the corresponding quadratic Capelli operator  in terms of the Casimir operators of $G_n$ and $K_n$. This is  carried out in the Appendix.

We  now describe the relation between
Theorem \ref{maintheorem} and earlier results on invariant differential operators on symmetric spaces.
The Capelli operators $C_\lambda$
were originally studied in connection to the 
  famous Capelli identity, which has also been  considerably generalized by  Howe and Umeda 
\cite{HoweUmeda} 
 from the viewpoint of 
 multiplicity-free actions, and by Kostant and the first author 
\cite{KostantSahi1}, \cite{KostantSahi2} 
 from the viewpoint of Jordan algebras.
For $\F=\R$ and $\lambda=\mathbbm 1$, the operator $D_{\mathbbm 1,-1}$ was first considered
 by Howe and Lee in \cite{HoweLee}, who computed its spectrum for $r=2$ and asked for the determination of the spectrum for general  $r$.  This was solved  by the first author more generally for $D_{m\mathbbm 1,s}$ and $\F=\R$,
\cite{SahiRep}, and subsequently 
by  Zhang and the first author  \cite{SahiZhang} 
 for arbitrary $\F$, where a connection with the Radon transform was also established.
The explicit form of the answer in \cite{SahiZhang} gave us the first hint that the general situation might have something to do with the Okounkov polynomials.
The result of \cite{SahiZhang} on the spectrum of $D_{m\mathbbm 1,s}$  is indeed a special case of our  Theorem \ref{maintheorem}.

Finally, we say a few words about prospects for future research that emerge from this work. 
Quite recently, Zhang and the first author  established another link between Okounkov interpolation polynomials and  the spectrum of Shimura operators on Hermitian symmetric spaces \cite{SahiZhang2}. 
It would be   interesting  to understand the connection between our main result and the results of \cite{SahiZhang2}.
Also, in view of our recent work \cite{SahiSal} on the Capelli eigenvalue problem in the case of the  supersymmetric pairs
$
(\g{gl}(m|m')\times \g{gl}(m|m'),\g{gl}(m|m'))$
 and $(\g{gl}(m|2m'),\g{osp}(m|2m'))$, 
it is natural to ask whether Theorem \ref{maintheorem} can also be extended to the setting of Lie superalgebras. This is likely to involve the deformed $BC$
interpolation polynomials of \cite{SerVes}. We remark that in forthcoming papers  \cite{AlldridgeSahiSal} and \cite{SahiSalSer}, we extend the results of 
\cite{SahiSal} to the setting of multiplicity-free actions obtained from Jordan superalgebras.
 Another interesting problem is to extend the  Littlewood Restriction Theorem (see Proposition \ref{thmLIL}) to the super setting, namely to
$(\g{gl}(m|2m'),\g{osp}(m|2m'))$. We are planning to study these problems in the near future. \\[2mm]
\textbf{Acknowledgement.} The authors thank Kyo Nishiyama and Nolan Wallach for helpful e-mail correspondences.
The research of Siddhartha Sahi was partially supported by a Simons Foundation grant  (509766) and of Hadi Salmasian by an NSERC Discovery Grant (RGPIN-2013-355464). 
Part of this work was carried out during the Workshop on Hecke Algebras and Lie Theory  held at the University of Ottawa during May 12--15, 2016. The authors thank the National Science Foundation (DMS-162350), the Fields Institute, and the University of Ottawa for funding this workshop.

\section{Parametrization of representations by partitions}
\label{Sec-PaamPart}
In this article, we will need various parametrizations of finite dimensional 
representations of 
$G_n$, $K_n$, and $G_r$ by partitions. Instead of working with representations of these real Lie groups, it will be more convenient to work with representations of  their complexifications.

Let $W:=\mathrm{Mat}_{n\times r}(\F)$ denote the space of $n\times r$ matrices with entries in $\F$. Furthermore, set
\[
A:=\{x\in \mathrm{Mat}_{r\times r}(\F)\, :\, x^\dagger=x\}.
\]
Then $X\subset W$ and $Z:=\{w^\dagger w\,:\, w\in X\}\subset A$ are open.
The $G_n\times G_r$-action on  $X$ is the restriction of the $G_n\times G_r$-action on $W$ given by
\[
(g_1,g_2)\cdot w:=g_1^{}wg_2^{-1}\text{ for }
(g_1,g_2)\in G_n\times G_r,\ w\in W.
\]
The $G_r$-action on $Z$ is the restriction of the $G_r$-action on $A$ given by 
\[
g\cdot x:=(g^\dagger)^{-1} xg^{-1}\ \text{ for }g\in G_r,\
x\in A.
\]
Let $\bfG_n$ and $\bfG_r$   
denote the complexifications of the real Lie groups $G_n$ and $G_r$. Similarly, let $\bfK_n\subset\bfG_n$ denote the complexification of $K_n$.
\begin{rmk}
\label{rmk-qHin2C}
From now on, we need to fix an embedding of quaternionic matrices into complex matrices of twice the size. For integers $k,m\geq 1$, let $x:=a+b\mathbf j\in\mathrm{Mat}_{k\times m}(\qH)$, where $a,b\in\mathrm{Mat}_{k\times m}(\C)$. We set
\[
\breve x:=\begin{bmatrix}
a& -\oline b\\
b & \oline a
\end{bmatrix}\in\mathrm{Mat}_{2k\times 2m}(\C).
\]
\end{rmk}
\begin{rmk}
The matrix realizations of the embeddings 
$\boldsymbol i_n:G_n\into \bfG_n$ and $\boldsymbol i_r:G_r\into \bfG_r$ are as follows. If 
$\F=\R$, then $\bfG_n\simeq \mathrm{GL}_n(\C)$ and
$\boldsymbol i_n:\mathrm{GL}_n(\R)\to \mathrm{GL}_n(\C)$ is the obvious map. If $\F=\C$, then $\bfG_n\simeq \mathrm{GL}_n(\C)\times \mathrm{GL}_n(\C)$ and $\boldsymbol i_n:\mathrm{GL}_n(\C)\to \mathrm{GL}_n(\C)\times \mathrm{GL}_n(\C)$ is the map 
$g\mapsto ((g^*)^{-1},g)$, where $g^*:=\oline g^T$ is the adjoint of $g$.
If $\F=\qH$, then 
$\bfG_n\simeq\mathrm{GL}_{2n}(\C)$ and 
$\boldsymbol i_n:\mathrm{GL}_n(\qH)\to \mathrm{GL}_{2n}(\C)$ is the map $g\mapsto \breve g$. The matrix realization of $\bfK_n$ as a subgroup of $\bfG_n$ is as follows. If $\F=\R$, then 
$\bfK_n=\{g\in\mathrm{GL}_n(\C)\, :\, g^Tg=I_{n\times n}\}$. If $\F=\C$, then $\bfK_n=\{(g,g)\, :\, g\in\mathrm{GL}_n(\C)\}$. Finally, if $\F=\qH$, then 
$\bfK_n=\{g\in\mathrm{GL}_{2n}(\C)\, :\, g^TJ_ng=J_n\}$, where
\begin{equation}
\label{eqMtrxJ}
J_n:=\begin{bmatrix}
0 & I_{n\times n}\\
-I_{n\times n} & 0
\end{bmatrix}.
\end{equation}
The definition of the embedding $\boldsymbol i_r:G_r\to \bfG_r$ is similar to that of $\boldsymbol i_n$. 
To help the reader, we summarize the information about $\bfG_n$ and $\bfK_n$  in Table 2 below.
\vspace{1mm}

\begin{center}
\begin{tabular}{cccc}
\bottomrule
$\F$ & $\bfG_n$ & $\bfK_n$ & Realization of $\bfK_n$ in $\bfG_n$\\
\toprule

$\R$ & $\mathrm{GL}_n(\C)$ & $\mathrm{O}_n(\C)$ & 
$\{g\in \mathrm{GL}_n(\C)\,:\,g^Tg=I_{n\times n}\}$\\

$\C$ &\ \  $\mathrm{GL}_n(\C)\times \mathrm{GL}_n(\C)$ \ \ & 
$\mathrm{GL}_n(\C)$ & $\{(g,g)\,:\,g\in\mathrm{GL}_n(\C)\}$\\

$\qH$ & $\mathrm{GL}_{2n}(\C)$ & $\mathrm{Sp}_{2n}(\C)$ &\ \  $\{g\in\mathrm{GL}_{2n}(\C)\,:\,g^TJ_ng=J_n\}$\ \ 
\\
\bottomrule
\end{tabular}

\vspace{2mm}
Table 2.\\[2mm]
\end{center}

\end{rmk}

Set $W_\C:=W\otimes_\R\C$ and $A_\C:=A\otimes_\R \C$.   
The map $X\overset{\mapvWA}{\longrightarrow}Z$
 has a unique holomorphic extension
$W_\C\overset{\mapvWA}{\longrightarrow}A_\C$.
 For an explicit description of  $A_\C$, $W_\C$, and the  map $\mapvWA:W_\C\to A_\C$, see Table 3 below.
\vspace{1mm}

  \begin{center}
\begin{tabular}{ccccc}
\bottomrule
$\F$  & $A_\C$ & $W_\C$ & $\mapvWA$ \\
\toprule
$\R$  & $\mathrm{Sym}_{r\times r}(\C)$ & $\mathrm{Mat}_{n\times r}(\C)$ & $x\mapsto x^Tx$\\
$\C$ &  
$\mathrm{Mat}_{r\times r}(\C)$ & $\mathrm{Mat}_{n\times r}(\C)\oplus\mathrm{Mat}_{n\times r}(\C) $ & $(x_1^{},x_2^{})\mapsto x_1^Tx_2^{}$\\
$\qH$&   $\mathrm{Skew}_{2r\times 2r}(\C)$ & $\mathrm{Mat}_{2n\times 2r}(\C)$ & $x\mapsto -x^TJ_nx$\\
\bottomrule
\end{tabular}

\vspace{2mm}
Table 3.\\[2mm]
\end{center}
In Table 3,  $\mathrm{Sym}_{r\times r}(\C)$ denotes the space of complex symmetric $r\times r$ matrices and  $\mathrm{Skew}_{2r\times 2r}(\C)$ denotes the space of complex skew symmetric $2r\times 2r$ matrices.
\begin{rmk}
The matrix realizations of the maps $A\to A_\C$ and 
$W\to W_\C$ are as follows. For $\F=\R$ and $\F=\C$, the map $A\into A_\C$ is the obvious embedding, and for $\F=\qH$, it is the map $a\mapsto -J_ra$, where 
$J_r$ is defined similar to \eqref{eqMtrxJ}.  
In fact the map $A\into A_\C$ is related to realization of Euclidean Jordan algebras (see
\cite[Sec. VIII.5]{FaKo}).
As for 
 $W\into W_\C$, it is the obvious embedding
if $\F=\R$, the map $w\mapsto (\oline w,w)$ if $\F=\C$, and the map $w\mapsto\breve w$ if $\F=\qH$. 
\end{rmk}
The action of $G_r$ on $A$ extends uniquely to a holomorphic action of $\bfG_r$ on $A_\C$. 
Similarly, the action of $G_n\times G_r$ on $W$ extends uniquely to a holomorphic action of $\bfG_n\times \bfG_r$ on $W_\C$. These holomorphic actions are explicitly  described in Table 4 below.
\vspace{1mm}
\begin{center}
{\
\begin{tabular}{cccc}
\bottomrule
$\F$ & $\bfG_r\Gact A_\C$ & 
$(\bfG_n\times \bfG_r)\Gact W_\C$ \\
\toprule
$\R$ & $\sS^2(\C^r)^*$ & 
$\C^n\otimes(\C^r)^*$ \\
$\C$ & $\C^r\otimes (\C^r)^*$ & 
$\left((\C^n)^*\otimes \C^r\right)\oplus \left(\C^n\otimes (\C^r)^*\right)$\\
$\qH$ & $\Lambda^2(\C^{2r})^*$ & 
$\C^{2n}\otimes (\C^{2r})^*$\\
\bottomrule

\end{tabular}
}
\vspace{2mm}

Table 4.
\end{center}

\begin{dfn}
For every integer $m\geq 1$, let $\mathrm H_m\subset\mathrm{GL}_m(\C)$ denote the standard Cartan subgroup of diagonal matrices, and let   $\mathrm B_m\subset\mathrm{GL}_m(\C)$ denote the  standard  Borel subgroup of upper triangular matrices. Let 
$\eps_1,\ldots,\eps_m$ denote the standard
generators of the weight lattice of $\mathrm{GL}_m(\C)$. For every
$\lambda:=(\lambda_1,\ldots,\lambda_m)\in\Z^m$ such that $\lambda_1\geq\cdots\geq\lambda_m$, we denote the $\mathrm{GL}_m(\C)$-module with $\mathrm{B}_m$-highest weight $\sum_{i=1}^m\lambda_i\eps_i$ by $M_\lambda$.
\end{dfn}

\begin{dfn}
The standard  Borel subgroup
of $\bfG_m$ will be denoted by $\bfB_m$. 
In cases $\F=\R$, $\F=\C$, and $\F=\qH$, the group 
$\bfB_m\subset \bfG_m$ equals 
$\mathrm B_m$, $\mathrm B_m\times\mathrm B_m$, and $\mathrm B_{2m}$. The standard Cartan subgroup of $\bfG_m$ will be denoted by $\bfH_m$.
\end{dfn}

\begin{rmk}
\label{rmkEll+sseq}
For every integer $m\geq 1$, we define $\cP_m:=\{(\lambda_1,\ldots,\lambda_m)\in\Z^m\,:\,\lambda_1\geq \cdots\geq \lambda_m\geq 0\}$. From now on, we will denote the length (that is, the number of nonzero parts) of a partition $\lambda\in\cP_m$ by  $\ell(\lambda)$. For two partitions $\lambda\in\cP_m$ and $\mu\in\cP_{k}$, where $k,m\geq 1$, we write $\lambda\sseq \mu$ if and only if $\ell(\lambda)\leq \ell(\mu)$ and $\lambda_i\leq \mu_i$ for every $1\leq i\leq \ell(\lambda)$.
\end{rmk}

Let $\sP(A)$ and $\sP(W)$ denote the $\C$-algebras of polynomials on $A_\C$ and $W_\C$. The canonical $G_r$-action on  $\sP(A)$, given by $g\cdot f(a):=f(g^{-1}\cdot a)$ for $g\in G_r$, $f\in\sP(A)$, and $a\in A$, extends uniquely to a holomorphic $\bfG_r$-action on $\sP(A)$. Similarly,  the canonical $G_n\times G_r$-action on  $\sP(W)$ 
 extends uniquely to a holomorphic $\bfG_n\times \bfG_r$-action on $\sP(W)$.
 The pullback
\begin{equation}
\label{q*PAPW}
\mapvWA^*:\sP(A)\to \sP(W)\ ,\ f\mapsto f\circ \mapvWA
\end{equation}
 is  a $\bfG_r$-equivariant  embedding of $\C$-algebras. 
The image of $\mapvWA^*$ is precisely described  
by the First Fundamental Theorem of invariant theory
\cite[Sec. 5.2.1]{GW09}. In particular,
$\mapvWA^*(\sP(A))=\sP(W)^{\bfK_n}$. 

By classical invariant theory
(for example see \cite{GW09}), $\sP(A)$  decomposes into a direct sum of irreducible $\bfG_r$-modules which are naturally parametrized by partitions $\lambda\in\cP_r$. Thus,
\begin{equation}
\label{P(A)=}
\sP(A)\simeq\bigoplus_{\lambda\in \cP_r}F_\lambda,
\end{equation}
where $F_\lambda$ is the irreducible $\bfG_r$-module
corresponding to $\lambda\in\cP_r$. 
In fact   $F_\lambda\simeq M_\lambda^*\otimes M_\lambda^{}$ if $\F=\C$, 
and $F_\lambda\simeq M_{\lambul}$ if $\F=\R,\qH$, 
where 
$\lambul:=(2\lambda_1,\ldots,2\lambda_r)\in\cP_r$ if $\F=\R$, and 
$\lambul:=(\lambda_1,\lambda_1,\ldots,\lambda_r,\lambda_r)\in\cP_{2r}$ if $\F=\qH$.

The map \eqref{q*PAPW} is 
$\bfG_r$-equivariant, and therefore $F_\lambda$ occurs as a $\bfG_r$-submodule of $\sP(W)$ for every $\lambda\in\cP_r$. 
Therefore by the well known 
$(\mathrm{GL}_n,\mathrm{GL}_r)$ duality (see \cite[Sec. 2.1]{HoweSchur} or \cite[Sec. 5.6.2]{GW09}),  for every $\lambda\in\cP_r$ there exists a unique irreducible $\bfG_n$-module $E_\lambda$ such that $E_\lambda\otimes F_\lambda$ occurs in $\sP(W)$ as a $\bfG_n\times \bfG_r$-submodule.
\begin{rmk} 
\label{rmk-Sphe}
Let $\sP_{\bfK_n}(W)$ denote the direct sum of irreducible $\bfK_n$-spherical $\bfG_n$-submodules of  $\sP(W)$. Then indeed 
\[
\sP_{\bfK_n}(W)\simeq\bigoplus_{\lambda\in\cP_r}E_\lambda\otimes F_\lambda\ \text{ as }
\bfG_n\times\bfG_r\text{-modules.}
\] 
\end{rmk}

\begin{rmk}
\label{rmk-2.2}
For every integer $l\geq  1$, let $S_{l\times l}$ be the $l\times l$ matrix with 1's in $(i,l-i+1)$-entry for every $1\leq i\leq l$, and with 0's elsewhere. 
Consider $g_\circ\in \bfG_n$ defined as follows. If $\F=\R$, then we set
\[
g_\circ:=\begin{bmatrix}
\frac{1+i}{2} I_{l\times l}& \frac{1-i}{2} S_{l\times l}\\
\frac{1-i}{2} S_{l\times l}& \frac{1+i}{2} I_{l\times l}
\end{bmatrix}
\text{ for }n=2l,
\text{ and }
g_\circ:=\begin{bmatrix}
\frac{1+i}{2} 
I_{l\times l}& 0_{l\times 1} & \frac{1-i}{2} S_{l\times l}\\
 0_{1\times l} & 1 &  0_{1\times l}\\
\frac{1-i}{2} S_{l\times l}&  0_{l\times 1} & \frac{1+i}{2} I_{l\times l}
\end{bmatrix}
\text{ for }n=2l+1.
\]
If $\F=\C$, then we set $g_\circ $ equal to the identity element of $\bfG_n$. Finally, if $\F=\qH$, then we set 
\[
g_\circ:=\begin{bmatrix}
 S_{n\times n}& 0\\
 0& I_{n\times n} 
\end{bmatrix}.
\]
We remark that when $\F=\R,\qH$, the map $\bfK_n\to \bfG_n$, $g\mapsto g_\circ g g_\circ^{-1}$ 
is  the  embedding $\mathrm{O}_n(\C)\into
\mathrm{GL}_n(\C)$ 
or 
$\mathrm{Sp}_{2n}(\C)\into\mathrm{GL}_{2n}(\C)$
that is given in \cite[Sec. 2.1.2]{GW09}.
In particular, 
in all cases $(g_\circ^{-1}\bfH_ng_\circ)\cap \bfK_n$ and 
$(g_\circ^{-1}\bfB_n g_\circ)\cap \bfK_n$  are Cartan and Borel subalgebras of $\bfK_n$. 

Set $\bfK:=\bfK_n$, and let $\bfM\subset\bfK$ denote the complexification of $M$. The $M$-spherical $K$-module $V_\mu$ is  naturally also an $\bfM$-spherical $\bfK$-module. By comparing  the calculation of highest weights of $\bfM$-spherical $\bfK$-modules in \cite[Sec. 12.3.2]{GW09} (the pertinent cases are types BDI, AIII, and CII) with the parametrization of representations of $\bfK$ by partitions that  uses generalized Schur--Weyl duality (see \cite[Thm 10.2.9]{GW09} and \cite[Thm 10.2.12]{GW09}), it follows that for every $\mu\in\cP_r$, the module $V_\mu$ is isomorphic to the $\bfK$-submodule of 
$E_\mu^*$ generated by $g_\circ^{-1}\cdot v_\lambda$, where $v_\lambda$ denotes the  $\bfB_n$-highest weight of $E_\mu^*$.
\end{rmk}

\begin{dfn}
Let $G$ be a group, and let $E$ and $F$ be $G$-modules. We 
set \[
[E:F]_{G}:=
\dim\Hom_{G}^{}(E,F).
\]
\end{dfn}

\begin{rmk}
\label{RmkLiLRChd}
In the following, we will need  the Littlewood--Richardson Rule, which we now recall (for a more elaborate reference,  see for example \cite[Sec. I.9]{Macdonald}). For a semistandard skew tableau $T$, the word $w(T)$ corresponding to $T$ is defined as the sequence of integers obtained by reading the contents of boxes of $T$ from right to left and from top to bottom. A word $w_1\cdots w_k$ in letters $\{1,\ldots,N\}$ is called a lattice permutation word if for every $1\leq i\leq k$ and every $1\leq j\leq N-1$, the number of occurrences of $j$ in $w_1\ldots w_i$ is greater than or equal to the number of occurrences of $j+1$. 
Now let $\lambda,\mu,\nu\in\cP_m$, where $m\geq 1$. The Littlewood--Richardson Rule states that 
$[M_\nu:M_\lambda\otimes M_\mu]_{\mathrm{GL}_m(\C)}^{}$  is 
equal to the number of tableaux $T$ of shape $\nu\bls \mu$ and weight $\lambda$ such that $w(T)$ is a lattice permutation word. In particular, if 
$[M_\nu:M_\lambda\otimes M_\mu]_{\mathrm{GL}_m(\C)}^{}\neq 0$, then $\mu,\lambda\sseq \nu$.

\end{rmk}

\begin{lem}
\label{lem-crtaag}
Let $\lambda,\mu\in\cP_r$. Then
$[V_\mu:E_\lambda]_{\bfK_n}^{}=
[V_\mu:E_\lambda^*]_{\bfK_n}^{}$.
\end{lem}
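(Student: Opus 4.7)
The plan is to reduce the lemma to the self-duality statement $V_\mu \simeq V_\mu^*$ as $\bfK_n$-modules. Granting this, the lemma follows immediately: since $V_\mu$ is irreducible and $E_\lambda|_{\bfK_n}$ is completely reducible, we have
\begin{align*}
[V_\mu : E_\lambda^*]_{\bfK_n}
 &= \dim \Hom_{\bfK_n}(V_\mu, E_\lambda^*)
  = \dim \Hom_{\bfK_n}(E_\lambda, V_\mu^*) \\
 &= \dim \Hom_{\bfK_n}(E_\lambda, V_\mu)
  = [V_\mu : E_\lambda]_{\bfK_n},
\end{align*}
where the second equality is the contragredient functor applied to morphisms, the third uses $V_\mu \simeq V_\mu^*$, and the last equality holds because for the irreducible $V_\mu$ both sides compute the multiplicity of $V_\mu$ in $E_\lambda|_{\bfK_n}$.

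The remaining task is the self-duality of $V_\mu$, which I would establish by treating the three cases separately. For $\F \in \{\R, \qH\}$ the complexified group $\bfK_n$ is $\mathrm{O}_n(\C)$ or $\mathrm{Sp}_{2n}(\C)$, both of which preserve a non-degenerate bilinear form on their defining representation; consequently every finite-dimensional irreducible representation of $\bfK_n$ is self-dual, and $V_\mu \simeq V_\mu^*$ is automatic. For $\F = \C$ we have $\bfK_n = \mathrm{GL}_n(\C)$, whose irreducible representations are generically not self-dual, so one must exploit the specific form of $V_\mu$. Combining Remark~\ref{rmk-2.2} with the Cartan--Helgason classification for symmetric pairs of type AIII from \cite[Sec.~12.3.2]{GW09}, one checks that the $\mathrm{GL}_n(\C)$-highest weight of $V_\mu$ is the palindromic sequence
\[
(\mu_1, \mu_2, \ldots, \mu_r, \underbrace{0, \ldots, 0}_{n - 2r}, -\mu_r, \ldots, -\mu_1) \in \Z^n,
\]
which is fixed by $-w_0$ (with $w_0$ the longest element of $S_n$); this gives $V_\mu \simeq V_\mu^*$.

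The main obstacle I foresee is the explicit identification of the $\mathrm{GL}_n(\C)$-highest weight of $V_\mu$ when $\F = \C$, which requires careful tracking of the embedding $\g a_\C \hookrightarrow \g k_\C = \g{gl}_n(\C)$ and how a highest weight of the form $\widetilde\mu|_{\g a_\C} = \sum_{i=1}^r 2\mu_i\sfe_i$ lifts to $\g h_\C$. The cases $\F \in \{\R, \qH\}$ are essentially free, resting only on the general self-duality of irreducible representations of $\mathrm{O}_n(\C)$ and $\mathrm{Sp}_{2n}(\C)$.
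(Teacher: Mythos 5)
Your proof is correct, and it genuinely differs from the paper's. The paper's proof shows instead that $E_\lambda \simeq E_\lambda^*$ as $\bfK_n$-modules (which clearly implies the lemma), via a single uniform observation: the holomorphic extension $\theta_n$ of the Cartan involution $g\mapsto (g^\dagger)^{-1}$ of $G_n$ fixes $\bfK_n$ pointwise, and twisting $E_\lambda$ by $\theta_n$ yields $E_\lambda^*$ as a $\bfG_n$-module, hence $E_\lambda \simeq E_\lambda^{\theta_n} \simeq E_\lambda^*$ as $\bfK_n$-modules. Your route — establishing $V_\mu \simeq V_\mu^*$ and pushing the dual through $\Hom$ — is also valid, and your reduction chain is sound. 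The tradeoff is that your argument is not uniform in $\F$: for $\F=\R,\qH$ self-duality of $V_\mu$ is immediate from the orthogonal/symplectic structure, but for $\F=\C$ you must invoke the explicit highest weight of $V_\mu$ (the palindromic form $(\mu_1,\ldots,\mu_r,0,\ldots,0,-\mu_r,\ldots,-\mu_1)$), which is correct but requires the Cartan--Helgason bookkeeping from \cite[Sec.~12.3.2]{GW09} and Remark~\ref{rmk-2.2}. The paper's Cartan-involution argument sidesteps all case analysis, which is why it is the preferred route here; on the other hand, the self-duality of $V_\mu$ is a cleaner conceptual fact about the symmetric pair and is worth knowing in its own right.
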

\begin{proof}
It is enough to show that $E_\lambda\simeq E_\lambda^*$  as $\bfK_n$-modules.
Let $\theta_n:\bfG_n\to\bfG_n$ denote the automorphism of $\bfG_n$ that is obtained by holomorphic extension of the 
Cartan involution 
$g\mapsto (g^{\dagger})^{-1}$
of $G_n$.
Let $E_\lambda^{\theta_n}$ be the $\bfG_n$-module that results from twisting $E_\lambda$ by $\theta_n$.
Then $E_\lambda^{\theta_n}\simeq
E_\lambda^*$ 
as  $\bfG_n$-modules. Moreover, since $\theta_n$ fixes $\bfK_n$ pointwise, $E_\lambda\simeq E_\lambda^{\theta_n}$ as $\bfK_n$-modules. The $\bfK_n$-module isomorphism $E_\lambda\simeq E_\lambda^*$ now follows immediately. 
\end{proof}

When $\F=\C$, 
branching from $\bfG_n$ to $\bfK_n$ is described by the Littlewood--Richardson Rule. 
 The next proposition is a branching from $\bfG_n$ to $\bfK_n$ when $\F\neq \C$. 
\begin{prp}
\label{thmLIL}
Assume that $\F=\R$ or $\F=\qH$, and  let $\lambda,\mu\in \cP_r$.
Then 
\begin{equation}
\label{VmuEla]}
[V_\mu:E_\lambda]_{\bfK_n}^{}
=
\sum_{\xi\in\cP_r}
[E_\lambda:
E_\mu\otimes E_{\xi}]_{\bfG_n}^{}.
\end{equation}
\end{prp}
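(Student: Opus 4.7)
The plan is to deduce \eqref{VmuEla]} from the classical Littlewood restriction theorem, which covers the inclusion $\bfK_n\subset\bfG_n$ both when $\F=\R$ (giving $\mathrm{O}_n(\C)\subset\mathrm{GL}_n(\C)$) and when $\F=\qH$ (giving $\mathrm{Sp}_{2n}(\C)\subset\mathrm{GL}_{2n}(\C)$).

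The first task is to pin down the participating modules in terms of standard partition labels. Combining Remark \ref{rmk-Sphe} with the description of $F_\lambda$ following \eqref{P(A)=} and the $(\mathrm{GL}_n,\mathrm{GL}_r)$-duality for $\sP(\mathrm{Mat}_{n\times r}(\C))$ (invoking Lemma \ref{lem-crtaag} if necessary to absorb a duality ambiguity), one obtains $E_\lambda\simeq M_{\lambul}^{\bfG_n}$ in both cases. For $V_\mu$, I would combine the Cartan--Helgason condition \eqref{eqwtiMUU} with the known description of spherical highest weights for the symmetric pairs $(\mathrm{O}_n,\mathrm{O}_r\times\mathrm{O}_{n-r})$ and $(\mathrm{Sp}_{2n},\mathrm{Sp}_{2r}\times\mathrm{Sp}_{2(n-r)})$, as in \cite[Sec.~12.3.2]{GW09}, to conclude that $V_\mu$ is the irreducible $\bfK_n$-module of highest weight $\mu^\bullet$, again uniformly in the paper's $\lambul$-notation.

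The remainder is a direct application of Littlewood's restriction theorem in its stable form: for a partition $\alpha$ satisfying $\ell(\alpha)\leq n/2$ in the orthogonal case and $\ell(\alpha)\leq n$ in the symplectic case, one has $[V_\beta^{\bfK_n}:M_\alpha^{\bfG_n}|_{\bfK_n}]=\sum_\sigma c^\alpha_{\beta,\sigma}$, where $\sigma$ ranges over partitions with all parts even (orthogonal case) or all column lengths even (symplectic case). The hypothesis $n\geq 2r$ places us in the stable range in both settings, since $\ell(\lambul)\leq r\leq n/2$ when $\F=\R$ and $\ell(\lambul)\leq 2r\leq n$ when $\F=\qH$. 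In either case the partitions $\sigma$ contributing to Littlewood's sum are precisely those of the form $\xi^\bullet$ for some partition $\xi$, and Remark \ref{rmkEll+sseq} further restricts $\xi$ to $\cP_r$. Littlewood's formula thus yields $[V_\mu:E_\lambda]_{\bfK_n}=\sum_{\xi\in\cP_r} c^{\lambul}_{\mu^\bullet,\xi^\bullet}$, which under the identification of $E_\bullet$ above matches the right hand side of \eqref{VmuEla]} term-by-term.

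The main technical delicacy lies in translating the Cartan--Helgason parameters of \eqref{eqwtiMUU} into the standard partition labels for irreducible $\mathrm{O}_n(\C)$- and $\mathrm{Sp}_{2n}(\C)$-representations; once those identifications are settled, the proof reduces to a routine substitution into Littlewood's formula.
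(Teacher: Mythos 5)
Your proposal is correct and follows essentially the same route as the paper's own proof: both reduce the statement to the classical Littlewood restriction theorem by translating $E_\lambda$ and $V_\mu$ into standard partition labels via $(\mathrm{GL}_n,\mathrm{GL}_r)$-duality, Remark \ref{rmk-2.2} and \cite[Sec.~12.3.2]{GW09}, handling the polynomial-versus-contragredient discrepancy through Lemma \ref{lem-crtaag}, and bounding the range of $\xi$ by the Littlewood--Richardson rule. The one thing you make slightly more explicit than the paper is the verification that $n\geq 2r$ places the arguments in the stable range of Littlewood's theorem (which the paper leaves implicit), and you should note that the precise identification is $E_\lambda^*\simeq M_{\lambul}$ rather than $E_\lambda\simeq M_{\lambul}$, which is exactly what the appeal to Lemma \ref{lem-crtaag} is there to fix.
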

\begin{proof}
The statement follows as a special case of the Littlewood Restriction Theorem, which  was first proved in 
\cite{Littlewood} (see also \cite{Koiketerada}, \cite{Gavarini}, and \cite[Sec. 1.3]{HTW}). We now outline the calculations that are needed to deduce the proposition from the Littlewood Restriction Theorem. We remark that the $\bfG_n$-modules that appear in the statement of the Littlewood Restriction Theorem are polynomial representations, whereas the $E_\lambda$ are indeed contragredients of polynomial representations. To get around this issue, we note that we can replace the left hand side of 
\eqref{VmuEla]} by $[V_\mu,E_\lambda^*]_{\bfK_n}^{}$ (see Lemma \ref{lem-crtaag})
and the terms  of the right hand side by
$[E^*_\lambda:
E^*_\mu\otimes E^*_{\xi}]_{\bfG_n}^{}
$.
Using Remark \ref{rmk-2.2} we can write the highest weight of $V_\mu$ explicitly, and then we can determine the partition that corresponds to $V_\mu$ in the parametrization of \cite[Thm 10.2.9]{GW09} and 
\cite[Thm 10.2.12]{GW09}.
If 
$[E_\lambda:E_\mu\otimes M_\xi]_{\bfG_n}^{}\neq 0$ for some $\xi\in\cP_n$, then Remark \ref{RmkLiLRChd} implies that $\ell(\xi)\leq r$ when $\F=\R$, and $\ell(\xi)\leq 2r$ when $\F=\qH$.  
 By putting all of these facts together, we can verify   that the statement of the proposition is a special case of the Littlewood Restriction Theorem.
\end{proof}

\section{The quadratic Capelli operators}
\label{Sec-theOprDlam}

In this section we define the differential operators $D_{\lambda,s}$. 
Let $\sD(A):=\bigoplus_{m=0}^\infty\sD^m(A)$
denote the $\C$-algebra of constant coefficient differential
operators on $A$, endowed with the usual $\Z$-grading.
We define $\sD(W):=\bigoplus_{m=0}^\infty\sD^m(W)$ similarly. 
There are natural $G_r$-actions on $\sD(A)$ and $\sD(W)$, and as in
Section \ref{Sec-PaamPart},
 these actions extend uniquely to holomorphic $\bfG_r$-actions on the same vector spaces.
Furthermore, the  canonical isomorphisms 
\begin{equation}
\label{sDMM}
\sD^m(A)\simeq \sP^m(A)^*\text{ and }\sD^m(W)\simeq \sP^m(W)^*
\end{equation} 
are $\bfG_r$-equivariant. 

Let $\sPD(A)$ and $\sPD(W)$ denote the algebras of polynomial coefficient differential operators on $A$ and $W$. The multiplication map  results in  isomorphisms of vector spaces
\begin{equation}
\label{sPDPDWA}
\sP(A)\otimes\sD(A)\simeq\sPD(A)\,\text{ and }\, \sP(W)\otimes\sD(W)\simeq\sPD(W).
\end{equation}

From now on, we set \[
\boldth:G_r\to G_r
\ ,\ 
\boldth(g):=(g^\dagger)^{-1}.
\]
\begin{dfn}
We define  bilinear forms 
$(\cdot,\cdot)_W^{}:W\times W\to\R $
and
$(\cdot,\cdot)_A^{}:A\times A\to\R $
by
\[
(x,y)_W^{}:= 
\Re(\tr(x^\dagger y))
\,\text{ and }\,
(x,y)^{}_A:=\Re(\tr(xy)).
\]
\end{dfn}

The bilinear form
$
(\cdot,\cdot)_W^{}
$
is  $K_n$-invariant and  
$\boldth$-invariant, that is,
\[
(kx,ky)_W^{}=(x,y)_W^{}\,\text{  and }\,(g\cdot x,y)_W^{}=(x,\boldth(g)^{-1}\cdot y)_W^{}
\,\text{ for }\,
x,y\in W,\  
k\in K_n,\, \text{ and }\, g\in G_r.
\]
Similarly, the bilinear form 
$
(\cdot,\cdot)_A^{} 
$
is $\boldth$-invariant, that is, $(g\cdot x,y)_A^{}=(x,\boldth(g)^{-1}\cdot y)_A^{}$ for $x,y\in A$ and $g\in G_r$. 
The bilinear forms $(\cdot,\cdot)_W^{}$ and $(\cdot,\cdot)_A^{}$ yield canonical isomorphisms 
\begin{equation}
\label{iotaAWs}
\iota_W^{}:W\to W^*\text{ and }
\iota_A^{}:A\to A^*.
\end{equation} These maps extend to 
$\bfG_r$-equivariant isomorphisms of $\C$-algebras 
\begin{equation}
\label{PDWPDA}
\iota_W^{}:\sD(W)\simeq \sP(W)\,\text{ and }\,
\iota_A^{}:\sD(A)\simeq\sP(A).
\end{equation}
Since $\iota_W^{}$ is also $K_n$-equivariant, it restricts to an isomorphism
$\sP(W)^{K_n}\to \sD(W)^{K_n}$. Consequently, we obtain a $\bfG_r$-equivariant isomorphism of $\C$-algebras
\[
(\iota_W^{})^{-1}\circ\mapvWA^*\circ\iota_A^{}:\sD(A)\to \sD(W)^{K_n}.
\]
Set $\breve\psi:=(\iota_W^{})^{-1}\circ\mapvWA^*
\circ\iota_A^{}$. From \eqref{sPDPDWA} it follows that
the map
\begin{equation}
\label{iotasPDAW}
\iota:\sPD(A)\into \sPD(W)^{K_n},\ 
\iota:=
\breve\psi\otimes \mapvWA^*
\end{equation}
is an  embedding of $\bfG_r$-modules.
From  \eqref{P(A)=} and \eqref{sDMM}  it follows that
$\sD(A)\simeq\bigoplus_{\lambda\in\cP_r} F_\lambda^*$, so that
\[
\sPD(A)\simeq \sP(A)\otimes \sD(A)\simeq
\bigoplus_{\lambda,\mu\in\cP_r}
F_\lambda^{}\otimes F_\mu^*\simeq
\bigoplus_{\lambda,\mu\in\cP_r}\Hom_\C(F_\mu,F_\lambda).
\]
By Schur's Lemma $[F_\mu:F_\lambda]_{\bfG_r}^{}\leq 1$, and equality
occurs if and only if $\lambda=\mu$. Thus,
\begin{equation}
\label{EqbfDl}
\sPD(A)^{\bfG_r}\simeq 
\bigoplus_{\lambda,\mu\in\cP_r}\Hom_{\bfG_r}^{}(F_\mu,F_\lambda)\simeq
\bigoplus_{\lambda\in\cP_r}\C I_\lambda,
\end{equation}
where $I_\lambda$ denotes the identity element of $\Hom_\C(F_\lambda,F_\lambda)$.
Let $C_\lambda\in\sPD(A)^{\bfG_r}$
be the differential operator that corresponds to $ I_\lambda$ by the isomorphism \eqref{EqbfDl}.
We now set
\begin{equation}
\label{DtildeLam}
\widetilde{D}_\lambda:=\iota(C_\lambda)\in\sPD(W)^{K_n\times G_r},
\end{equation}
where $\iota$ is the map defined in \eqref{iotasPDAW}. 

Set $\Psi(x):=\det(x^\dagger x)$ for $x\in W$
(in the case $\F=\qH$ we define
$\det(z):=\det(\breve z)$ for 
$z\in \mathrm{Mat}_{r\times r}(\qH)$, where $\breve z$ is as in Remark \ref{rmk-qHin2C}).
For every $\lambda\in\cP_r$ and every $s\in\C$, let $\widetilde{D}_{\lambda,s}$ be the differential operator on $X$ defined by
\[
\widetilde D_{\lambda,s}:=\Psi^s\widetilde D_\lambda \Psi^{-s}.
\]
\begin{dfn}
\label{dfnofDllam}
For $\lambda\in\cP_r$ and $s\in\C$, we set 
$(D_{\lambda,s} f)(\varphi(x)):=\widetilde{D}_{\lambda,s}(f\circ \varphi)(x)$ for every $f\in C^\infty(Y)$ and every $x\in X$,
where $X\overset{\varphi}{\longrightarrow}Y$ is the 
map
defined in Section \ref{sec:introduction}.
\end{dfn}
Since ${D}_{\lambda,s}$ does not increase supports, by Peetre's Theorem \cite[Thm II.1.4]{Helg}
it  is a  differential  operator 
on $Y$. From $K_n$-invariance of 
$\widetilde{D}_{\lambda,s}$ it follows that $D_{\lambda,s}$ is also
$K_n$-invariant.

\section{The polynomials $P_\lambda(x;\tau,\alpha)$}
\label{sec:van-uniqn}
In this section we review the definition and properties of 
the polynomials $P_\lambda(x;\tau,\alpha)$.
\begin{dfn}
Let $\lambda:=(\lambda_1,\ldots,\lambda_r)\in\Z^r$. 
A Laurent polynomial $f(x_1,\ldots,x_r)$ in variables $x_1,\ldots,x_r$ is called $\lambda$-monic if the coefficient of $x_1^{\lambda_1}\cdots x_r^{\lambda_r}$ in $f(x_1,\ldots,x_r)$ is equal to 1.
\end{dfn}
Recall that 
the Weyl group $\mathsf W$ of type $BC_r$ is a semidirect product $\mathsf W:=S_r\ltimes \{\pm 1\}^r$, where $S_r$ denotes the symmetric group on $r$ letters. 
In \cite{Okounkov}, Okounkov 
defined a family of  Laurent polynomials \[
\POk_\lambda(x;q,t,a)\in\C(q,t,a)[x_1^{\pm 1},\ldots,x_r^{\pm1}],
\] parametrized by partitions $\lambda\in\cP_r$ (we use the notation of \cite[Sec. 5]{Koornwinder}).
 Every $\POk_\lambda$ is the unique $\lambda$-monic Laurent polynomial
of degree $|\lambda|$ that is
invariant under the action of 
$\mathsf W$ on the $x_i$'s by 
permutations and inversions, 
and satisfies the vanishing condition
\[
\POk_\lambda(aq^\mu t^\delta;q,t,a)=0\text{ unless } \lambda\subseteq \mu,
\]
where 
$\delta:=(r-1,\ldots,0)$ and 
$\mu\in\cP_r$  (see \cite[Sec. 5.3]{Koornwinder}).
Here as usual we define $q^\mu t^\delta:=(q^{\mu_1}t^{\delta_1},\ldots,q^{\mu_r}t^{\delta_r})$. 
The
polynomials 
$\POk_\lambda(x;q,t,a)$
are analogues (for the $BC_r$-type root system) of  
the $q$-deformed interpolation Macdonald polynomials
defined by Knop \cite{Knop}
and Sahi \cite{SahiInt}.

By taking  the $q\to 1$ limit of
$\POk_\lambda$
 (see \cite[Def. 7.1]{Koornwinder}), one obtains a  polynomial
$
P_\lambda(x;\tau,\alpha)\in\C(\tau,\alpha)[x_1,\ldots,x_r]
$.
More precisely, 
\[
P_\lambda(x;\tau,\alpha)
:=\lim_{q\uparrow 1}(1-q)^{-2|\lambda|}\POk_\lambda
(q^x; q,q^\tau,q^\alpha),
\]
where $|\lambda|:=\sum_i\lambda_i$.
From the symmetry property of $\POk_\lambda$ it follows that $P_\lambda(x;\tau,\alpha)$ is 
invariant under permutations and sign changes of $x_1,\ldots,x_r$.
\begin{dfn}
A polynomial in variables $x_1,\ldots,x_r$ is called \emph{even-symmetric} if it is invariant under  permutations and sign changes of the $x_i$'s.
\end{dfn}

A combinatorial formula for $P_\lambda(x;\tau,\alpha)$ is given in \cite[Sec. 7]{Koornwinder}. To recall this formula, we need some terminology.  Every  partition $\lambda$ can be represented by a Young diagram consisting of boxes $\flat:=\flat(i,j)$, where \[(i,j)\in\{(p,q)\in\Z^2\ :\ 1\leq p\leq \ell(\lambda)\text{ and }
1\leq q\leq \lambda_i\}.
\] 
The
arm length and leg length of a box $\flat:=\flat(i,j)$
in the Young diagram of $\lambda$  are  $a_\lambda(\flat):=\lambda_i-j$ and 
$l_\lambda(\flat):=|\{k>i\,:\,\lambda_k\geq j\}|$. 
We also set
$a_\lambda'(\flat):=j-1$ (arm co-length) and $l_\lambda'(\flat):=i-1$ (leg co-length). 
By a reverse tableau of shape $\lambda$ with entries in $\{1,\ldots,r\}$ we mean a filling of  the Young diagram that corresponds to $\lambda$, with  weakly decreasing rows and strongly decreasing columns.
For a reverse tableau $T$ of shape $\lambda$  and an integer $k\in\{0,\ldots,r\}$, 
let $\lambda^{(k)}\sseq \lambda$ 
be the partition corresponding to the boxes $\flat\in \lambda$ that satisfy $T(\flat)>k$. Thus for $1\leq k\leq n$, $\lambda^{(k-1)}\bls \lambda^{(k)}$ is the horizontal strip consisting  of the boxes that contain $k$. Finally, for two partitions $\nu\sseq\mu$, we define 
$(R\bls C)_{\mu\bls \nu}$
 to be the set of boxes which are in a row of $\mu$ intersecting with $\mu\bls
\nu$, but not in a column of $\mu$ intersecting with $\mu\bls
\nu$.
Set
\[
b_{\mu}(\flat;\tau):=\frac{
a_{\mu}(\flat)+\tau(l_{\mu}(\flat)+1)
}{
a_{\mu}(\flat)+\tau l_{\mu}(\flat)+1
}\,
\text{ and }
\,\psi_T(\tau):=
\prod_{i=1}^r\ 
\prod_{\flat\in(R\bls C)_{\lambda^{(i-1)}\bls \lambda^{(i)}}}
\frac{b_{\lambda^{(i)}}(\flat;\tau)}{b_{\lambda^{(i-1)}}(\flat;\tau)}.\]
Then 
\begin{equation}
\label{formcmbPip}
P_\lambda(x;\tau,\alpha)
=
\sum_T
\psi_T(\tau)\prod_{\flat\in\lambda}
\left(
x_{T(\flat)}^2-(a'_\lambda(\flat)+\tau(n-T(\flat)-l'_\lambda(\flat))+\alpha)^2
\right),
\end{equation}
where the sum is over all reverse tableaux $T$ of shape $\lambda$ 
with entries in $\{1,\ldots,r\}$.
\begin{prp}
\label{prpuniqPip}
Fix real numbers $\alpha,\tau>0$. For every partition $\lambda\in\cP_r$,  the polynomial \[
Q_\lambda^{}(x):=P_\lambda(x;\tau,\alpha)\in
\C[x_1,\ldots,x_r]
\]
 is the unique $2\lambda$-monic, even-symmetric polynomial that 
satisfies the vanishing condition
\begin{equation}
\label{vancond}
Q_\lambda(\mu+\varrho_{\tau,\alpha})=0
\text{ if }|\mu|\leq |\lambda|\text{ and }\mu\neq \lambda,
\end{equation}
where $\varrho_{\tau,\alpha}$ is given in  \eqref{rhotaualph}.
\end{prp}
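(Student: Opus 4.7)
The plan is to separate existence from uniqueness. For existence, I would verify that $P_\lambda(x;\tau,\alpha)$ itself satisfies the three listed properties. Even-symmetry is inherited from the $\mathsf W$-invariance of $\POk_\lambda$ under permutations and inversions: under the substitution $x_i\leftarrow q^{x_i}$, this symmetry becomes invariance under permutations and sign changes in the $q\uparrow 1$ limit. The degree is read off from (\ref{formcmbPip}), since every summand is a product of $|\lambda|$ quadratic factors in the $x_i$'s, so the total degree is $2|\lambda|$; moreover, dropping the constants in those quadratic factors shows that the top-degree homogeneous part of $P_\lambda$ is an even-symmetric polynomial in $x$ that coincides (with our normalization) with the Jack polynomial $J_\lambda(x_1^2,\ldots,x_r^2;\tau)$, and the $2\lambda$-monic property then follows from the corresponding monicity of the Jack polynomial. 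The vanishing property (\ref{vancond}) is the $q\uparrow 1$ analogue of the vanishing $\POk_\lambda(aq^\mu t^\delta;q,t,a)=0$ unless $\lambda\sseq \mu$: taking $t=q^\tau$, $a=q^\alpha$, and $x=\mu+\varrho_{\tau,\alpha}$ yields $aq^\mu t^\delta=q^{\mu+\varrho_{\tau,\alpha}}$, while the hypothesis $|\mu|\leq|\lambda|$ together with $\lambda\sseq\mu$ forces $\mu=\lambda$.

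For uniqueness, I would use a dimension count. Let
\[
V:=\{f\in\C[x_1,\ldots,x_r]\,:\, f\text{ is even-symmetric and }\deg f\leq 2|\lambda|\}.
\]
Via the substitution $y_i=x_i^2$, $V$ is isomorphic to the space of symmetric polynomials in $y_1,\ldots,y_r$ of total degree at most $|\lambda|$, which has a basis of monomial symmetric functions indexed by $\{\nu\in\cP_r\,:\,|\nu|\leq|\lambda|\}$. Thus $\dim V$ is exactly one more than the number of vanishing conditions in (\ref{vancond}), so uniqueness will follow from injectivity of the evaluation map
\[
\mathrm{Ev}\colon V\longrightarrow \C^{\{\nu\in\cP_r\,:\,|\nu|\leq|\lambda|\}},\qquad f\longmapsto \bigl(f(\nu+\varrho_{\tau,\alpha})\bigr)_\nu.
\]
To prove this, I would use the already-constructed polynomials $\{P_\nu(x;\tau,\alpha)\}_{|\nu|\leq|\lambda|}\subset V$ together with the \emph{extra vanishing}
\[
P_\nu(\nu+\varrho_{\tau,\alpha})\neq 0\qquad\text{for every }\nu\in\cP_r,\ \tau,\alpha>0,
\]
to be extracted from (\ref{formcmbPip}) by isolating the reverse tableau(x) whose product of quadratic factors does not vanish at $x=\nu+\varrho_{\tau,\alpha}$, and observing that the resulting $\psi_T(\tau)$-weighted product is a product of strictly positive quantities under $\tau,\alpha>0$. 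Granting this, the matrix $(P_\nu(\mu+\varrho_{\tau,\alpha}))_{\mu,\nu}$ is triangular with respect to the partial order by $|\cdot|$ and has nonzero diagonal, so the $P_\nu$ are linearly independent in $V$; since there are $\dim V$ of them, they form a basis and $\mathrm{Ev}$ is a bijection. Any $Q_\lambda\in V$ satisfying (\ref{vancond}) then expands as $Q_\lambda=\sum_\nu c_\nu P_\nu(x;\tau,\alpha)$, and evaluating at $\mu+\varrho_{\tau,\alpha}$ with $|\mu|$ increasing forces $c_\nu=0$ for $\nu\neq\lambda$; the $2\lambda$-monic normalization then fixes $c_\lambda=1$.

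The main obstacle in this plan is the extra vanishing $P_\nu(\nu+\varrho_{\tau,\alpha})\neq 0$ for $\tau,\alpha>0$. Verifying it requires pinpointing which reverse tableaux $T$ of shape $\nu$ contribute non-trivially at $x=\nu+\varrho_{\tau,\alpha}$, and then checking that the surviving product of factors and $\psi_T(\tau)$-weights is genuinely nonzero in the positive-real regime. The cleanest approach would be to reduce to the corresponding non-vanishing statement for the Jack polynomial via the top-degree homogeneous term of $P_\nu$, where positivity of $\tau,\alpha$ guarantees the required sign behavior; this is also the step where the strict positivity hypothesis on $\tau$ and $\alpha$ enters in an essential way.
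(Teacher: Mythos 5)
Your uniqueness argument is structurally the same as the paper's: both proceed by observing that $\dim \cS_N$ (even-symmetric polynomials of degree $\leq 2N$) equals $|\{\nu\in\cP_r : |\nu|\leq N\}|$, then exhibiting a basis $\{Q_\nu\}_{|\nu|\leq N}$ of $\cS_N$ whose evaluation matrix $\bigl[Q_{\nu'}(\nu+\varrho_{\tau,\alpha})\bigr]_{\nu,\nu'}$ is triangular with respect to the partial order by $|\cdot|$ and has nonzero diagonal, from which invertibility of the evaluation map follows. Your existence discussion is also consistent with what the paper records: the paper simply cites \cite[Sec. 7]{Koornwinder} for existence and notes that the specialization at real $\tau>0$ is well-defined because the denominators in $\psi_T(\tau)$ do not vanish.

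Where you go astray is in the one point you yourself flag as the obstacle: the extra vanishing $P_\nu(\nu+\varrho_{\tau,\alpha})\neq 0$. Your ``cleanest approach,'' namely reducing to the top-degree homogeneous term of $P_\nu$ (a Jack polynomial in $x_i^2$), cannot work: the value of a polynomial at the fixed finite point $\nu+\varrho_{\tau,\alpha}$ is not controlled by its leading homogeneous part. Your first suggestion (isolating the surviving tableaux in \eqref{formcmbPip} and checking positivity of their $\psi_T$-weighted products) is morally in the right direction, but amounts to re-deriving the \emph{principal specialization} of $P_\nu$, which Koornwinder already gives as an explicit product over boxes in \cite[Eq. (7.5)]{Koornwinder}. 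That closed-form product is manifestly positive (hence nonzero) when $\tau,\alpha>0$, and this is precisely what the paper invokes. So the correct fix is to cite (or reprove) that product formula rather than attempting a degree-based argument; once you have it, the rest of your proof goes through verbatim and coincides with the paper's.
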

\begin{proof}
The existence statement follows from 
\cite[Sec. 7]{Koornwinder}
and the fact that 
 the specialization of 
$P_\lambda$ at the  values of $\tau$ and $\alpha$ is well-defined, because
when $\tau>0$, the denominators of the coefficients $\psi_T(\tau)$ of  $P_\lambda(x;\tau,\alpha)$ that appear in the combinatorial formula  
\eqref{formcmbPip} 
do not vanish. For the uniqueness statement, we use a method based on \cite{Sahi}. 
First note that from $\alpha,\tau>0$ and \cite[Eq. (7.5)]{Koornwinder} it follows that $Q_\lambda
(\lambda+\varrho_{\tau,\alpha})\neq 0$.
Next fix an integer $N>0$,  set $\cI_{N}:=\{\mu\in\cP_r\,:\,|\mu|\leq N\}$, and let $\cS_N$ denote the vector space of even-symmetric polynomials  in $x_1,\ldots,x_r$  of degree at most $2N$.   Note that $\dim\cS_N=|\cI_N|$. For every $\mu\in\cI_N$, we consider the linear maps
\[
L_\mu:\cS_N\to\C\ ,\ f\mapsto f(\mu+\varrho_{\tau,\alpha}).
\]
Next we define a total order  $\prec$ on $\cI_N$, as follows.
We set $\mu\prec \nu$ for every $\mu,\nu\in\cI_N$ which satisfy 
$|\mu|<|\nu|$,
and then we extend the resulting partial order to a total order on $\cI_N$.
Then the matrix \[
\left[L_{\mu}(Q_{\mu'})
\right]_{\mu,\mu'\in\cI_N}
\] 
is upper triangular with 
nonzero entries on the diagonal. 
It follows that the linear map 
\[
\cS_N\to\C^{\dim\cS_N}\ ,\ f\mapsto \big[f(\mu+\varrho_{\tau,\alpha})\big]_{\mu\in \cI_N}.
\]
is invertible. Uniqueness of $Q_\lambda$ follows immediately from the latter statement. 
\end{proof}

Recall that $d:=\dim(\F)$. For every $\lambda\in\cP_r$, we set \begin{equation}
\label{defConstgamla}
\gamma_\lambda:=\frac{(-2d)^{|\lambda|}}{\prod_{\flat\in\lambda}
\left(
\frac{d}{2}(a_\lambda(\flat)+1)
+l_\lambda(\flat)
\right)}.
\end{equation}
This is the scalar that appears in the statement of Theorem \ref{maintheorem}. 
\section{The vanishing property of $c_{\lambda,s}(\mu)$}
\label{sec:vanishprpp}

In this section, we prove a few  technical statements which will be used in the proof of  Theorem \ref{maintheorem}. The ultimate goal of this section is to prove Proposition \ref{PrpVanCondd}.
The proof of Theorem 
\ref{maintheorem}
will be completed in 
Section \ref{Sec-pf-Main-Thm}.
Recall that $\varrho:=(\varrho_1,\ldots,\varrho_r)$ is the vector defined in \eqref{formulRho}.
\begin{lem}
\label{clmusdd+r}
Let $\lambda\in\cP_r$. Then there exists a 
 polynomial $d_\lambda(x,s)$ which  
is even-symmetric
in $x:=(x_1,\ldots,x_r)$,
has $x$-degree and $s$-degree at most $2|\lambda|$, and 
satisfies 
\[
c_{\lambda,s}(\mu)=d_\lambda(\mu+\varrho,s),
\]
 for every $s\in\C$ and every 
$\mu\in\cP_r$.
\end{lem}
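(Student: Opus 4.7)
The plan is to define $d_\lambda$ by directly transporting the Harish-Chandra image across the linear isomorphism $\mathbbm{i}\colon \C^r \to \g a_\C^*$. Setting
\[
d_\lambda(x,s) := \mathrm{HC}(D_{\lambda,s})(\mathbbm{i}(x)),
\]
the claimed evaluation identity is then immediate, because $\mathbbm{i}(\mu) = \widetilde{\mu}|_{\g a_\C}$ and $\mathbbm{i}(\varrho) = \rho$ combine to give $d_\lambda(\mu + \varrho, s) = \mathrm{HC}(D_{\lambda,s})(\widetilde{\mu}|_{\g a_\C} + \rho) = c_{\lambda,s}(\mu)$. The even-symmetry of $d_\lambda$ in $x$ is also automatic: the restricted Weyl group $\mathsf{W}$ of type $BC_r$ acts on $\g a_\C^*$ by permutations and sign changes of $\sfe_1, \ldots, \sfe_r$, this action pulls back through $\mathbbm{i}$ to the analogous action on $x_1, \ldots, x_r$, and $\mathrm{HC}(D_{\lambda,s})$ lies in $\sP(\g a_\C^*)^{\mathsf{W}}$ by construction.

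The actual content of the lemma is therefore the two degree bounds, and both will follow once I establish that $D_{\lambda,s}$ has differential order at most $2|\lambda|$ with coefficients polynomial in $s$ of degree at most $2|\lambda|$. For the order bound I will track the construction of $D_{\lambda,s}$ step by step. The explicit $\bfG_r$-module decomposition of $\sP(A)$ recorded in \eqref{P(A)=} and the plethysms implicit in Table 4 place $F_\lambda$ in $\sP^{|\lambda|}(A)$ in all three cases $\F = \R, \C, \qH$; hence $C_\lambda$, which under \eqref{EqbfDl} corresponds to the identity of $F_\lambda$ and thus lies in the image of $F_\lambda \otimes F_\lambda^* \subset \sP^{|\lambda|}(A) \otimes \sD^{|\lambda|}(A)$, has order at most $|\lambda|$. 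Since $\mapvWA$ is quadratic, the pullback $\mapvWA^*$ doubles polynomial degree, and the defining formula $\breve\mapvWA = \iota_W^{-1} \circ \mapvWA^* \circ \iota_A$ together with the degree-preserving nature of $\iota_W, \iota_A$ in \eqref{PDWPDA} shows that $\breve\mapvWA$ doubles the order of constant-coefficient operators. Hence $\widetilde D_\lambda = \iota(C_\lambda)$ has order at most $2|\lambda|$ on $W$. Conjugation by the smooth function $\Psi^s$ preserves order, and descent along $\varphi\colon X \to Y$ can only decrease order, so $D_{\lambda,s}$ has order at most $2|\lambda|$ on $Y$.

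For the polynomial $s$-dependence, I will expand $\Psi^s \partial^I \Psi^{-s}$ for each monomial partial derivative $\partial^I$ of order $k$ via iterated Leibniz: each time a derivative falls on $\Psi^{-s}$ the chain rule contributes a single factor of $-s$, so the result is a differential operator of order $k$ whose coefficients (in a fixed frame of partial derivatives) are polynomials in $s$ of degree at most $k \leq 2|\lambda|$. Finally, the Harish-Chandra homomorphism for the compact symmetric space $Y = K/M$ is filtration-preserving and $\C$-linear, so both the $x$-degree bound and the polynomial $s$-dependence of degree at most $2|\lambda|$ transfer to $\mathrm{HC}(D_{\lambda,s})$ and hence to $d_\lambda$.

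The only mildly technical point is verifying case-by-case that $F_\lambda$ sits in $\sP^{|\lambda|}(A)$ rather than in some higher-degree component, and that the order doubling under $\iota$ is as stated; these reduce to routine inspection of the plethysms behind Table 4 and the definitions in \eqref{iotaAWs}--\eqref{iotasPDAW}. Everything else is a formal consequence of the Leibniz rule and standard properties of the Harish-Chandra homomorphism.
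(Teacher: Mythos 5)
Your argument is correct and rests on the same two ingredients as the paper's proof: the Harish-Chandra homomorphism controls the even-symmetric polynomial dependence on $\mu+\varrho$ of degree at most $2|\lambda|$, and the Leibniz rule applied to $\Psi^s\widetilde D_\lambda\Psi^{-s}$ controls the polynomial dependence on $s$ of degree at most $2|\lambda|$. The paper combines these by evaluating the eigenvalue identity at the base point $y_\circ$ and running a Vandermonde interpolation in $s$, whereas you instead invoke the linearity and filtration-preservation of $\mathrm{HC}$ to transfer the $s$-polynomiality of the operator family $D_{\lambda,s}$ (viewed in the finite-dimensional space of $K$-invariant operators of order at most $2|\lambda|$) directly to its symbol — a cosmetic repackaging of the same argument, and equally valid once one notes that the descent of $\widetilde D_{\lambda,s}$ to $Y$ also depends polynomially on $s$.
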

\begin{proof}
The proof is similar to \cite[Lemma 3.1]{SahiZhang}.
Throughout the proof we fix $\lambda$. Recall that $K:=K_n$ and $M:=K_r\times K_{n-r}$. 
For every $s\in\C$, 
the differential operator
$D_{\lambda,s}$
is $K$-invariant and has order at most $2|\lambda|$. 
By \eqref{eqwtiMUU}, for every $\mu\in\cP_r$, the highest weight $\widetilde\mu$ of $V_\mu$ satisfies $\widetilde\mu\big|_{\g a_\C}=\sum_{i=1}^r2\mu_i\sfe_i$. 
Therefore from the Harish-Chandra homomorphism \cite[Chap. II]{Helg} it follows that for every $s\in\C$, the scalar $c_{\lambda,s}(\mu)$ is an  even-symmetric polynomial 
of degree at most $2|\lambda|$
evaluated at $\mu+\varrho$. 
Next we set
\[
x_\circ:=\begin{bmatrix}I_{r\times r}\\ 0\end{bmatrix}\in X,
\]
and we denote the image of $x_\circ$ in $Y$ by $y_\circ$.
Choose an $M$-fixed vector $h_\mu\in V_\mu\subseteq C^\infty(Y)$ such that $h_\mu(y_\circ)=1$. 
By evaluating both sides of  the relation $D_{\lambda,s}h_\mu=c_{\lambda,s}(\mu)h_\mu$ at $y_\circ$, we obtain
\begin{equation}
\label{clmustd}
c_{\lambda,s}(\mu)=\widetilde{D}_\lambda(\det(x^\dagger x)^{-s}\big(h_\mu\circ\varphi)\big)(x_\circ),
\end{equation}
where $\widetilde{D}_\lambda$ is defined in \eqref{DtildeLam}. Since $\widetilde D_\lambda$ is a polynomial coefficient differential operator of order at most 
$2|\lambda|$,  from the Leibniz rule it follows that for fixed $\mu$, the right hand side of 
\eqref{clmustd} is a polynomial in $s$ of degree at most $2|\lambda|$. Consequently, 
\[
c_{\lambda,s}(\mu)=\sum_{j=0}^{2|\lambda|}a_j(\mu)s^j\text{ for every }s\in\C.
\]
For $2|\lambda|+1$ distinct values of $s$, we obtain a linear system in the coefficients $a_j(\mu)$ whose coefficients form an invertible Vandermonde matrix. Since
fox fixed $s\in\C$ we have shown that $c_{s,\lambda}(\mu)$ is a polynomial in $\mu$ of degree at most $2|\lambda|$, it follows that $a_j(\mu)$ is also a polynomial in $\mu$ of degree at most $2|\lambda|$. 
Consequently, both the $x$-degree and the $s$-degree of $c_{s,\lambda}$ are at most $2|\lambda|$.
The statement that 
$d_\lambda$ is even-symmetric follows from the fact that
$c_{\lambda,s}(\mu)$ is even-symmetric as a polynomial in $\mu+\varrho$.
\end{proof}
\begin{dfn}
\label{DFNPCHI}
For every $\chi\in\cP_r$, let $\sP_\chi(W)$ denote the $F_\chi$-isotypic component of $\sP(W)$.
\end{dfn}
We remark that $\sP_\chi(W)$ is $\bfG_n$-invariant.
\begin{prp}
\label{VetaPm+l*}
Let $\lambda,\nu\in \cP_r$, and let $m\in\Z$  such that $m\geq \max\{\lambda_1,\nu_1\}$. Set \[
\eta:=(m-\nu_r,\ldots,m-\nu_1)
\text{ and }\chi:=(m-\lambda_r,\ldots,m-\lambda_1).
\]
If $[V_\eta:\sP_{\chi}(W)]_{\bfK_n}^{}>0$, then $\lambda\sseq \nu$.

\end{prp}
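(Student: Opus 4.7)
My plan is to reduce the claim to a combinatorial statement about Littlewood--Richardson coefficients, and to carry this out case by case for $\F=\R,\C,\qH$.

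First I would observe that $\lambda\sseq\nu$ is equivalent to $\eta\sseq\chi$: the identities $\chi_j=m-\lambda_{r-j+1}$ and $\eta_j=m-\nu_{r-j+1}$ reverse componentwise inequalities, so $\lambda_i\leq\nu_i$ for all $i$ iff $\eta_j\leq\chi_j$ for all $j$. Next, using the Howe duality decomposition $\sP_{\bfK_n}(W)\simeq\bigoplus_\lambda E_\lambda\otimes F_\lambda$ of Remark \ref{rmk-Sphe}, I would verify that in each of the three cases no irreducible $\bfG_r$-type in $\sP(W)$ outside the $\bfK_n$-spherical part is isomorphic to $F_\chi$; consequently $\sP_\chi(W)\simeq E_\chi\otimes F_\chi$ as a $\bfG_n\times\bfG_r$-module, so $[V_\eta:\sP_\chi(W)]_{\bfK_n}=(\dim F_\chi)\,[V_\eta:E_\chi]_{\bfK_n}$. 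The problem reduces to showing $[V_\eta:E_\chi]_{\bfK_n}>0\Rightarrow \eta\sseq\chi$.

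For $\F=\R$ and $\F=\qH$, Proposition \ref{thmLIL} immediately yields $\xi\in\cP_r$ with $[E_\chi:E_\eta\otimes E_\xi]_{\bfG_n}>0$. Since $E_\lambda\simeq M_{\lambul}^*$ in these cases, taking duals converts this into $[M_{\chi^\bullet}:M_{\eta^\bullet}\otimes M_{\xi^\bullet}]_{\bfG_n}>0$, and the Littlewood--Richardson rule (Remark \ref{RmkLiLRChd}) forces $\eta^\bullet\sseq\chi^\bullet$. Because $\eta^\bullet,\chi^\bullet$ are obtained from $\eta,\chi$ by doubling parts (for $\F=\R$) or duplicating entries (for $\F=\qH$), this yields $\eta\sseq\chi$.

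The case $\F=\C$ is the main obstacle, since Proposition \ref{thmLIL} is unavailable and $\bfK_n=\mathrm{GL}_n(\C)$ sits diagonally in $\bfG_n=\mathrm{GL}_n(\C)\times\mathrm{GL}_n(\C)$. Here the Howe duality computation gives $E_\chi\simeq M_\chi\boxtimes M_\chi^*$ as a $\bfG_n$-module, restricting to $M_\chi\otimes M_\chi^*$ on $\bfK_n$. Remark \ref{rmk-2.2} identifies $V_\eta$ with the $\mathrm{GL}_n(\C)$-irreducible of highest weight $\eta^\flat:=(\eta_1,\ldots,\eta_r,0,\ldots,0,-\eta_r,\ldots,-\eta_1)$, obtained as the cyclic subrepresentation of $M_\mu^*\otimes M_\mu$ generated by the weight vector of weight $\mu-w_0\mu$. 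By Frobenius reciprocity the hypothesis becomes $\dim\mathrm{Hom}_{\mathrm{GL}_n(\C)}(M_{\eta^\flat}\otimes M_\chi,M_\chi)>0$. The obstruction is the non-polynomial highest weight $\eta^\flat$, which I would resolve by twisting by $\det^{\eta_1}$: this converts $M_{\eta^\flat}$ into the polynomial representation $M_{\widetilde\eta}$ with $\widetilde\eta:=\eta^\flat+\eta_1(1,\ldots,1)\in\cP_n$, and similarly shifts $M_\chi$ to $M_{\widetilde\chi}$ with $\widetilde\chi:=\chi+\eta_1(1,\ldots,1)$. The hypothesis becomes $[M_{\widetilde\chi}:M_{\widetilde\eta}\otimes M_\chi]_{\mathrm{GL}_n(\C)}>0$, the classical Littlewood--Richardson rule yields $\widetilde\eta\sseq\widetilde\chi$, and a coordinate-by-coordinate check (the last $r$ entries are automatic from $\eta_i\geq 0$, the middle $n-2r$ are trivial, and the first $r$ collapse to $\eta_i\leq\chi_i$) gives $\eta\sseq\chi$.
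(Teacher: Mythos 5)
Your proof is correct and follows essentially the same route as the paper: reduce via $(\mathrm{GL}_n,\mathrm{GL}_r)$-duality to $[V_\eta:E_\chi]_{\bfK_n}>0$, then in Cases $\F=\R,\qH$ apply Proposition \ref{thmLIL} plus the Littlewood--Richardson rule (after passing to duals), and in Case $\F=\C$ identify $E_\chi|_{\bfK_n}\simeq M_{\widetilde\chi}\otimes M_{\widetilde\chi}^*$ and apply the Littlewood--Richardson rule after a determinant twist. The only differences are cosmetic: you state the conclusion as $\eta\sseq\chi$ rather than $\lambda\sseq\nu$, and you make explicit the $\det^{\eta_1}$-twist and the hom-tensor adjunction that the paper leaves implicit when invoking Remark \ref{RmkLiLRChd} on the mixed-variance modules.
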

\begin{proof}
Recall from Section \ref{Sec-PaamPart} that 
by $(\mathrm{GL}_n,\mathrm{GL}_r)$-duality,
$\sP_{\chi}(W)\simeq E_\chi^{\oplus\dim F_\chi}$ as $\bfG_n$-modules.
Therefore we can assume that $[V_\eta:E_{\chi}]_{\bfK_n}^{}> 0$.
We will consider  two separate cases.

\textbf{Case I:} $\F=\R,\qH$. 
Proposition
\ref{thmLIL}
implies that $
[E_{\chi}:E_{\eta}
\otimes E_{\xi}]_{\bfG_n}^{}>0
$
 for some $\xi\in\cP_r$,
and Remark \ref{RmkLiLRChd} implies that
$\eta\subseteq \chi$, hence $\lambda\sseq\nu$.

\textbf{Case II:} $\F=\C$. Let $\widetilde\chi\in\cP_n$ be obtained from $\chi$ by adding $n-r$ zeros on the right of the parts of $\chi$. 
Our assumption entails that the restriction of
the $\mathrm{GL}_n(\C)\times \mathrm{GL}_n(\C)$-module $M_{\widetilde \chi}^{}\otimes M_{\widetilde \chi}^{*}$
 to the diagonal subgroup $\mathrm{GL}_n(\C)$ contains the $\mathrm{GL}_n(\C)$-module $M_{\widetilde{\eta}}$, where \[
\widetilde{\eta}:=(m-\nu_r,\ldots,m-\nu_1,0,\ldots,0,-m+\nu_1,\ldots,-m+\nu_r)\in\cP_n.
 \]
Remark \ref{RmkLiLRChd}
implies  that $-m+\nu_i\geq -m+\lambda_i$ for every $1\leq i\leq r$, hence $\lambda\subseteq \nu$.
\end{proof}

%
The proof of our next result, Proposition \ref{propR^mKdec},  
is based on facts from the theory of symmetric functions, which we now review quickly  
(for a comprehensive reference, see \cite{Macdonald}). 
Let \[
\cP:=\varinjlim_k\cP_k
\] be the set of all partitions, where the maps $\cP_k\to\cP_{k+1}$ are given by $(\lambda_1,\ldots,\lambda_k)\mapsto (\lambda_1,\ldots,\lambda_k,0)$, and let 
\[
\Lambda:=\varprojlim \C[x_1,\ldots,x_k]^{S_k}
\]
 denote the ring of symmetric functions, where the maps
$\C[x_1,\ldots,x_{k+1}]\to \C[x_1,\ldots,x_{k}]$ are given by $f\mapsto f(x_1,\ldots,x_k,0)$. 
As usual, we equip $\Lambda$ with a scalar product defined by 
$
\lag h_\lambda,m_\mu\rag_\Lambda^{}:=\delta_{\lambda,\mu}
$,
where $h_\lambda$ and $m_\mu$ are the complete and monomial symmetric functions associated to  $\lambda,\mu\in\cP$. The Schur functions $s_\lambda$, $\lambda\in\cP$, form an orthonormal basis for $\Lambda$. For every two $\lambda,\mu\in\cP$ such that $\mu\subseteq\lambda$, the  skew Schur function $s_{\lambda\bls\mu}\in\Lambda$ satisfies the relation
$
\lag s_{\lambda\bls\mu},s_\nu\rag_\Lambda^{}=
\lag s_\lambda,s_\mu s_\nu\rag_\Lambda^{}
$
 for every  $\nu\in\cP$.
It is well known \cite[Ex. 7.56(a)]{Stanley2} that for any skew diagram $\lambda\bls \mu$ we have 
\begin{equation}
\label{180deg}
s_{\lambda\bls \mu}=s_{(\lambda\bls\mu)^\circ},
\end{equation}
where $(\lambda\bls\mu)^\circ$ denotes the skew diagram obtained by a 180 degree rotation of $\lambda\bls\mu$.

\begin{prp}
\label{propR^mKdec}
Let $m\geq 1$ be an integer, let
$\mathbf m:=(m,\ldots,m)\in\cP_r$ be the partition corresponding to the $r\times m$ rectangular Young diagram, 
and set $\cP(\mathbf m):=\{\lambda\in\cP_r\,:\,\lambda\sseq\mathbf m\}$.
Let 
$\sP_\mathbf m(W)$ be as in Definition \ref{DFNPCHI}.
Then as $\bfK_n$-modules, 
\[\sP_\mathbf m(W)\simeq\bigoplus_{\mu\in\cP(\mathbf m)}V_\mu.
\]
\end{prp}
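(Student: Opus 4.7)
The plan is to pass through $(\bfG_n,\bfG_r)$-duality to reduce to a branching problem for $E_\mathbf{m}$, and then exploit the rectangular shape of $\mathbf{m}$ via identity \eqref{180deg}.

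The first step is to observe that for the rectangular partition $\mathbf{m}=(m,\ldots,m)$, the $\bfG_r$-module $F_\mathbf{m}$ is one-dimensional in each of the three cases: $F_\mathbf{m}\simeq M_\mathbf{m}^*\otimes M_\mathbf{m}$ (the trivial character) for $\F=\C$, and $F_\mathbf{m}\simeq M_{\mathbf{m}^\bullet}$ (a power of the determinant character of $\GL_r(\C)$ or $\GL_{2r}(\C)$) for $\F=\R,\qH$. Combined with the $(\bfG_n,\bfG_r)$-duality $\sP_\mathbf{m}(W)\simeq E_\mathbf{m}\otimes F_\mathbf{m}$ recalled in the proof of Proposition \ref{VetaPm+l*}, this yields $\sP_\mathbf{m}(W)\simeq E_\mathbf{m}$ as $\bfK_n$-modules, so it suffices to show $E_\mathbf{m}|_{\bfK_n}\simeq\bigoplus_{\mu\in\cP(\mathbf{m})}V_\mu$.

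The heart of the proof is the computation of $[V_\mu:E_\mathbf{m}]_{\bfK_n}$. When $\F=\R$ or $\F=\qH$, Proposition \ref{thmLIL} together with Lemma \ref{lem-crtaag} writes this multiplicity as $\sum_{\xi\in\cP_r}c^{\mathbf{m}^\bullet}_{\mu^\bullet,\xi^\bullet}$, a sum of Littlewood--Richardson coefficients in $\GL_N(\C)$ with $N=n$ or $2n$. Since $\mathbf{m}^\bullet$ is itself a rectangle, identity \eqref{180deg} yields $s_{\mathbf{m}^\bullet\bls\mu^\bullet}=s_{(\mu^*)^\bullet}$ with $\mu^*:=(m-\mu_r,\ldots,m-\mu_1)$ lying in $\cP_r$ precisely when $\mu\sseq\mathbf{m}$; hence $c^{\mathbf{m}^\bullet}_{\mu^\bullet,\xi^\bullet}=\delta_{\xi,\mu^*}$, and $[V_\mu:E_\mathbf{m}]_{\bfK_n}$ equals $1$ for $\mu\sseq\mathbf{m}$ and $0$ otherwise. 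When $\F=\C$, one has $\bfG_n=\GL_n(\C)\times\GL_n(\C)$ with $\bfK_n=\GL_n(\C)$ embedded diagonally, and $E_\mathbf{m}\simeq M_\mathbf{m}\otimes M_\mathbf{m}^*$ restricts to the diagonal tensor product. Twisting $M_\mathbf{m}^*$ by $\det^m$ into $M_{\mathbf{m}'}$ with $\mathbf{m}':=(m^{n-r},0^r)$, one computes $M_\mathbf{m}\otimes M_{\mathbf{m}'}$ by Littlewood--Richardson; the rectangular shapes force $c^\nu_{\mathbf{m},\mathbf{m}'}=1$ exactly when $\nu=(\mu_1+m,\ldots,\mu_r+m,\,m^{n-2r},\,m-\mu_r,\ldots,m-\mu_1)$ for $\mu\sseq\mathbf{m}$, and $0$ otherwise. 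Untwisting the $\det^{-m}$ factor gives $\GL_n(\C)$-irreducibles with highest weights $(\mu_1,\ldots,\mu_r,0^{n-2r},-\mu_r,\ldots,-\mu_1)$, which by Cartan--Helgason are exactly those of the $V_\mu$.

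Combining the two cases, every $V_\mu$ with $\mu\in\cP(\mathbf{m})$ appears in $\sP_\mathbf{m}(W)$ with multiplicity one and no other $V_\mu$ appears, which establishes the proposition. The main obstacle is the Littlewood--Richardson bookkeeping for the rectangular shape and the precise matching of the resulting irreducible $\bfK_n$-modules with the $V_\mu$'s under the parametrization of Remark \ref{rmk-2.2}, particularly in the $\F=\C$ case where one must work with rational rather than polynomial representations of $\GL_n(\C)$.
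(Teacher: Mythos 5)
Your proposal follows the paper's route in all essentials: reduce via $(\bfG_n,\bfG_r)$-duality to the branching problem for $E_\mathbf m|_{\bfK_n}$ (using $\dim F_\mathbf m=1$), and then compute multiplicities case-by-case with the Littlewood Restriction Theorem (or Littlewood--Richardson) exploiting the rectangular shape through identity \eqref{180deg}. The multiplicity computations you give in both Case I and Case II are correct, and the Case II ($\F=\C$) treatment is in fact complete, since you compute the full set of coefficients $c^\nu_{\mathbf m,\mathbf m'}$.

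There is, however, a gap in the $\F=\R,\qH$ case. Proposition \ref{thmLIL} computes $[V_\mu:E_\lambda]_{\bfK_n}$ only for $\bfM$-spherical modules $V_\mu$ with $\mu\in\cP_r$; it says nothing about the multiplicities of other irreducible $\bfK_n$-modules inside $E_\mathbf m$. So your computation shows that each $V_\mu$ with $\mu\sseq\mathbf m$ occurs exactly once and no other $V_\mu$ (with $\mu\in\cP_r$) occurs, but it does not by itself show that $E_\mathbf m|_{\bfK_n}$ contains no \emph{non-spherical} constituents. Your final sentence (``no other $V_\mu$ appears, which establishes the proposition'') quietly elides this. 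The paper closes this gap at the outset by observing that $f\mapsto\Psi^{-m}f$ is a $K$-equivariant embedding $\sP_\mathbf m(W)\into C^\infty(X)^{G_r}\simeq C^\infty(Y)$, and since $C^\infty(Y)_{K\text{-finite}}$ decomposes exclusively into $\bfM$-spherical $\bfK$-modules, the same holds for $\sP_\mathbf m(W)$. An alternative fix, staying within your framework, is to invoke the \emph{full} Littlewood Restriction Theorem (not just its spherical specialization) and note that because $\mathbf m^\bullet$ is rectangular, the relation $\delta=(\mathbf m^\bullet\bls\nu)^\circ$ forces the Littlewood parameter $\nu$ to inherit the even-row (for $\F=\R$) or paired-parts (for $\F=\qH$) condition from $\delta$, so $\nu$ is automatically of the form $\mu^\bullet$ and hence indexes a spherical module. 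Either way, this step must be made explicit.
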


\begin{proof}

Recall that $\bfK:=\bfK_n$ is the complexification of $K:=K_n$, and that $\bfM$ is the complexification of $M$. The map $\sP_{\mathbf m}(W)\to C^\infty(X)^{G_r}\simeq C^\infty(Y)$, $f\mapsto \Psi^{-m}f$, is a $K$-equivariant embedding. It follows that $\sP_\mathbf m(W)$ is a direct sum of irreducible $\bfM$-spherical $\bfK$-modules.
Next we determine the multiplicity of every $\bfK$-module $V_\lambda$ in $\sP_\mathbf m(W)$. 
As in Section
\ref{Sec-PaamPart},  $(\mathrm{GL}_n,\mathrm{GL}_r)$-duality entails that $\sP_\mathbf m(W)\simeq E_{\mathbf m}$ as $\bfG_n$-modules.
We consider two separate cases.

\textbf{Case I:} $\F=\R,\qH$. 
For every $\eta:=(\eta_1,\eta_2,\ldots,\eta_r)\in\cP_r$, we define
$\eta^\bullet\in\cP$ as in Section \ref{Sec-PaamPart}. 
By Proposition \ref{thmLIL},
\begin{equation}
\label{multipsumI}
[V_\lambda:E_{\mathbf m}]_{\bfK_n}^{}=
\sum_{\xi\in\cP_r}
[E_{\mathbf m}
:E_{\lambda}\otimes E_{\xi}]_{\bfG_n}^{}.
\end{equation}
If   
$[E_{\mathbf m}
:E_{\lambda}\otimes E_{\xi}]_{\bfG_n}^{}\neq 0$ for some $\xi\in\cP_r$, then
$[E^*_{\mathbf m}
:E^*_{\lambda}\otimes E^*_{\xi}]_{\bfG_n}^{}\neq 0$ and hence
 $\lambda,\xi\subseteq \mathbf m$
(see Remark 
\ref{RmkLiLRChd}). Therefore
\begin{align*}
[E_{\mathbf m}
:E_{\lambda}\otimes E_{\xi}]_{\bfG_n}^{}&=
\lag s_{\mathbf m^\bullet},s_{\lambda^\bullet}s_{\xi^\bullet}
\rag_\Lambda\\
&=\lag s_{\mathbf m^\bullet\bls \xi^\bullet},s_{\lambda^\bullet}\rag_\Lambda=
\lag s_{{(\mathbf m^\bullet\bls \xi^\bullet)}^\circ},s_{\lambda^\bullet}\rag_\Lambda
=\delta_{(\mathbf m\bls\xi)^\circ,\lambda}.
\end{align*}
It follows that  the value of  \eqref{multipsumI} is 0 or 1, with the latter occurring exactly when $\lambda\sseq\mathbf m$.

\textbf{Case II:}  
$\F=\C$. 
Let $\alpha,\beta\in\cP_n$ be defined by
\[
\alpha:=(\underbrace{m,\ldots,m}_{r\text{ times}},0,\ldots,0)\ \text{ and }\ 
\beta:=
(\underbrace{m,\ldots,m}_{n-r\text{ times}},0,\ldots,0).
\]
The statement of the proposition  
is equivalent
(after twisting  $E_\mathbf m$ by $\det(\cdot)^m\otimes  1$) to showing that 
the restriction of the $\mathrm{GL}_n(\C)\times\mathrm{GL}_n(\C)$-module $M_\alpha\otimes M_\beta$ 
 to the diagonal $\mathrm{GL}_n(\C)$ is a multiplicity-free 
direct sum of $\mathrm{GL}_n(\C)$-modules $M_\eta$ for $\eta$ of the form
\begin{equation}
\label{etaformM}
(m+\xi_1,\ldots,m+\xi_r,
\underbrace{m,
\ldots,m}_{n-2r\text{ times}},m-\xi_r,\ldots,m-\xi_1),
\end{equation}
where $\xi:=(\xi_1,\ldots,\xi_r)$ varies through partitions satisfying $\xi_1\leq m$.
Assume that $M_\eta\subseteq M_\alpha\otimes M_\beta$. 
From Remark \ref{RmkLiLRChd}
it follows that 
$\beta\subseteq\eta$, and in addition, every column of the skew Young diagram corresponding to $\eta\bls \beta$ has height at most $r$. Since $r\leq n-r$, it follows that $\eta_i\leq m$ for every $i>n-r$. Consequently, the skew Young diagram corresponding to $\eta\bls \beta$ is a disjoint union of 
two (non-skew) diagrams corresponding to the partitions
\[
\eta^{+}:=
(\eta_1-m,\ldots,\eta_r-m)\text{ and }
\eta^{-}:=(\eta_{n-r+1},\ldots,\eta_n).
\]
 From \cite[Sec. I.5.7]{Macdonald} it follows that $s_\eta=s_{\eta^+}s_{\eta-}$, hence
\begin{align*}
\label{sEsAsB}
[M_\eta:M_\alpha\otimes M_\beta]_{\mathrm{GL}_n(\C)}^{}&=\lag s_\eta,s_{\alpha}s_\beta\rag_\Lambda
=
\lag s_{\eta\bls\beta},s_\alpha\rag_\Lambda
\\
&
=\lag s_{\eta^+}s_{\eta-},s_\alpha\rag_\Lambda =
\lag s_{\eta^+},s_{\alpha\bls\eta^-}\rag_\Lambda
=
\lag s_{\eta^+},
s_{({\alpha\bls\eta^-})^\circ}
\rag_\Lambda
=
\delta_{\eta^+,({\alpha\bls\eta^-})^\circ}
.
\end{align*}
It is now straightforward to verify that 
$[M_\eta:M_\alpha\otimes M_\beta]_{\mathrm{GL}_n(\C)}^{}\leq 1$, with equality occuring if and only if $\eta$ is of the form given in \eqref{etaformM}. 
\end{proof}

\begin{rmk}
\label{RHORHO}
Recall that $\varrho:=(\varrho_1,\ldots,\varrho_r)$ is the vector defined in \eqref{formulRho}.
Let
$\varrho'\in\C^r$ denote the vector $\varrho_{\tau,\alpha}$ defined in \eqref{rhotaualph}, for
$\tau:=\frac{d}{2}$ and $\alpha:=s-\varrho_1$.
Then
the vectors $\varrho$ and $\varrho'$
are related by the relation
\begin{equation*}
\varrho_{r-i+1}+\varrho_i'=s
\text{ for }1\leq i\leq r.
\end{equation*}
\end{rmk}

\begin{prp}
\label{PrpVanCondd}
Let $\lambda\in\cP_r$, and  let 
$d_\lambda(x,s)$ be as in Lemma \ref{clmusdd+r}.  
Let 
$\varrho_{\tau,\alpha}$ be 
as in
\eqref{rhotaualph}
for $\tau:=\frac{d}{2}$ and 
$\alpha:=s-\varrho_1$. For every 
$\nu\in\cP_r$, if $\lambda\nsubseteq\nu$ then
\begin{equation}
\label{dla2v+ddelt+2s-}
d_\lambda
\big(\nu+\textstyle
\varrho_{\tau,\alpha}
,s\big)=0
\ 
\text{for every }s\in\C.
\end{equation}
\end{prp}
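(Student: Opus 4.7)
The plan is to use the even-symmetry of $d_\lambda$ to transfer the desired vanishing at $\nu + \varrho_{\tau,\alpha}$ to a vanishing of $c_{\lambda,-m}(\tilde\mu)$ at sufficiently negative integer $s = -m$ where the reflected argument is a genuine partition, and then to verify the latter by combining the construction of $\widetilde{D}_\lambda$ with the branching Proposition \ref{VetaPm+l*}. By Lemma \ref{clmusdd+r}, $d_\lambda(x,s)$ is even-symmetric in $x$, so applying the reflection $R:(x_1,\ldots,x_r) \mapsto (-x_r,\ldots,-x_1)$ and the identity $\varrho_i + (\varrho_{\tau,\alpha})_{r-i+1} = s$ from Remark \ref{RHORHO}, a direct calculation gives $R(\nu + \varrho_{\tau,\alpha}) = \tilde\mu + \varrho$ with $\tilde\mu := (-\nu_r - s, \ldots, -\nu_1 - s)$. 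Since $d_\lambda(\nu + \varrho_{\tau,\alpha}, s) = d_\lambda(\tilde\mu + \varrho, s)$ is polynomial in $s$, it suffices to verify the vanishing at $s = -m$ for integers $m \geq \max(\lambda_1, \nu_1)$, for which $\tilde\mu = (m - \nu_r, \ldots, m - \nu_1)$ is a partition in $\cP_r$ satisfying $\tilde\mu \sseq \mathbf m := (m,\ldots,m)$; Lemma \ref{clmusdd+r} then identifies $d_\lambda(\tilde\mu + \varrho, -m) = c_{\lambda, -m}(\tilde\mu)$, so the target becomes $c_{\lambda,-m}(\tilde\mu) = 0$.

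Next, by Proposition \ref{propR^mKdec} the $K$-equivariant embedding $\sP_{\mathbf m}(W) \hookrightarrow C^\infty(Y)$, $f \mapsto \Psi^{-m} f$, has image containing a unique copy of $V_{\tilde\mu}$; I would pick $f$ in the $V_{\tilde\mu}$-summand of $\sP_{\mathbf m}(W)$ with $\Psi^m(h_{\tilde\mu}\circ\varphi) = f|_X$, so that the formula from the proof of Lemma \ref{clmusdd+r} yields $c_{\lambda,-m}(\tilde\mu) = \widetilde{D}_\lambda(f)(x_\circ)$. I would then expand $\widetilde{D}_\lambda = \sum_i \mapvWA^*(v_i)\cdot \partial_i$, with $\{v_i\}$ a basis of the copy of $F_\lambda \subset \sP(A)$ singled out by $C_\lambda$ via \eqref{EqbfDl}--\eqref{iotasPDAW} and $\partial_i := \breve\psi(\dd_{v^i})$ the dual basis of $F_\lambda^* \subset \sD(W)$. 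Since $\mapvWA^*(\sP(A)) \subset \sP(W)^{\bfK_n}$ and $\iota_W$ is $\bfK_n$-equivariant, each $\partial_i$ lies in $\sD(W)^{\bfK_n}$ and hence commutes with the $\bfK_n$-action; viewed as a $\bfG_r$-equivariant element of $F_\lambda^*$, it sends $\sP_{\mathbf m}(W)$ into $\bigoplus_{\chi} \sP_\chi(W)$ where the sum runs over $\chi$ with $[F_{\mathbf m}:F_\chi \otimes F_\lambda]_{\bfG_r} > 0$.

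The crucial input is now a multiplicity-one branching for the rectangular partition $\mathbf m$: the 180-degree rotation identity \eqref{180deg} for skew Schur functions, combined with the description of $F_\mu$ in terms of $s_{\mu^\bullet}$ in each of the three cases $\F = \R, \C, \qH$ (Section \ref{Sec-PaamPart}), forces
\[
[F_{\mathbf m} : F_\chi \otimes F_\lambda]_{\bfG_r} = \delta_{\chi, \lambda^c}, \qquad \lambda^c := (m-\lambda_r, \ldots, m-\lambda_1).
\]
Therefore $\partial_i(f) \in \sP_{\lambda^c}(W)$, and by $\bfK_n$-equivariance of $\partial_i$ together with $f$ lying in the $V_{\tilde\mu}$-summand, $\partial_i(f)$ sits in the $V_{\tilde\mu}$-isotypic component of $\sP_{\lambda^c}(W)$. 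Setting $\eta := \tilde\mu$ and $\chi := \lambda^c$ in the contrapositive of Proposition \ref{VetaPm+l*}, the hypothesis $\lambda \not\sseq \nu$ yields $[V_{\tilde\mu}:\sP_{\lambda^c}(W)]_{\bfK_n} = 0$, so $\partial_i(f) = 0$ for every $i$, whence $\widetilde{D}_\lambda(f) = 0$ and $c_{\lambda,-m}(\tilde\mu) = 0$.

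The hard part is the rectangularity-based branching identity $[F_{\mathbf m}:F_\chi \otimes F_\lambda]_{\bfG_r} = \delta_{\chi,\lambda^c}$: this multiplicity-one statement (which requires separate verification via the doublings $\mathbf m^\bullet$ and $\lambda^\bullet$ in each of the three cases $\F = \R, \C, \qH$) is what pins the image $\partial_i(f)$ to the single $\sP_{\lambda^c}(W)$ already precluded by Proposition \ref{VetaPm+l*}; without the rectangularity of $\mathbf m$, or equivalently without \eqref{180deg}, the image of $\partial_i$ would spread across several $\sP_\chi(W)$ and the argument would fail.
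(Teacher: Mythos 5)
Your proposal is correct and follows the paper's own proof nearly step for step: the reflection via even-symmetry together with Remark~\ref{RHORHO}, specialization to $s=-m$, passage from $C^\infty(Y)$ to $\sP_\mathbf m(W)$ via Proposition~\ref{propR^mKdec} (the paper packages this through the isomorphism $\boldsymbol j_m\colon \sR_\mathbf m(W)\to\sP_\mathbf m(W)$, you through an explicit lift $f$ of $h_{\tilde\mu}$; these are the same), the factorization of the constant-coefficient part of $\widetilde D_\lambda$ through $\sP_{\lambda^c}(W)$, and the contrapositive of Proposition~\ref{VetaPm+l*}. The one place where you add genuine detail is the multiplicity-one branching $[F_\mathbf m : F_\chi\otimes F_\lambda]_{\bfG_r}=\delta_{\chi,\lambda^c}$ for rectangular $\mathbf m$, driven by \eqref{180deg}; the paper leaves this implicit in the phrase ``by considering the $\bfG_r$-action,'' and spelling it out (in each of the three cases $\F=\R,\C,\qH$ via the doublings $\lambda^\bullet$) is a useful clarification of why the image of $\iota(F_\lambda^*)$ acting on $\sP_\mathbf m(W)$ concentrates in a single isotypic component.
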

\begin{proof}
Fix $\nu\in\cP_r$. Since $d_\lambda(x,s)$ is a polynomial in $x=(x_1,\ldots,x_r)$ and $s$, it is enough to prove 
\eqref{dla2v+ddelt+2s-}
 for infinitely many values of 
$s$. Since $d_\lambda$ is even-symmetric, vanishing of $d_\lambda(x,s)$ at a point $(x_1,\ldots,x_r)$ is equivalent to its vanishing at the point $(-x_r,\ldots,-x_1)$. Therefore it suffices to show that
$
d_\lambda(y,s)=0
$
for $y:=(y_1,\ldots,y_r)$ given by
\[
y_i:=-\nu_{r+1-i}-\frac{d}{2}(i-1)-s+\varrho_1\text{ for every }
1\leq i\leq r.
\]
Next set $s:=-m$ for some integer $m\geq \lambda_1$.  By
Remark \ref{RHORHO} and
 Lemma \ref{clmusdd+r}, 
it suffices to show that
\begin{equation}
\label{EQvCla-m}
c_{\lambda,-m}(\eta)=0\,
\text{ for }\,\eta:=(m-\nu_r,\ldots,m-\nu_1).
\end{equation}
Recall that  $\mathbf m:=(m,\ldots,m)\in\cP_r$ is the partition corresponding to an $r\times m$ Young diagram. 
Set \begin{equation}
\label{dfnofRM}
\sR_\mathbf m(W):=\left\{\Psi^{-m}f\ :\ f\in\sP_\mathbf m(W)\right\},
\end{equation}
where $\Psi$ is defined in Section
\ref{Sec-theOprDlam}.
The map \[
\boldsymbol{j}_m:\sR_\mathbf m(W)\to\sP_\mathbf m(W)\ ,\  f\mapsto \Psi^{m}f
\] is an isomorphism of $\bfK_n$-modules. 
Therefore Proposition \ref{propR^mKdec} implies that $V_\eta$ occurs as a 
$\bfK_n$-submodule of $\sR_\mathbf m(W)$. 
By restriction to $X$, and then factoring $G_r$-invariant functions on $X$ through $Y\simeq X/G_r$, we obtain  an embedding $\sR_\mathbf m(W)\subset C^\infty(Y)$. From
$K_n$-invariance of
 $D_{\lambda,-m}$  it follows that 
$D_{\lambda,-m} \sR_\mathbf m\subseteq \sR_\mathbf m$. Furthermore,  the diagram
\[
\xymatrix{
\sR_\mathbf m(W)\ar[r]^{D_{\lambda,-m}}\ar[d]_{\boldsymbol{j}_m} & \sR_\mathbf m(W)\ar[d]^{\boldsymbol{j}_m}\\
\sP_\mathbf m(W)\ar[r]_{D_\lambda}& \sP_\mathbf m(W)
}
\] 
commutes.
From the latter commutative diagram it follows that, in order to prove \eqref{EQvCla-m}, it suffices  to show that $(D_\lambda\circ\boldsymbol j_m)V_\eta=\{0\}$.
Note that $D_\lambda\in\iota(F_\lambda\otimes F_\lambda^*)$, where $\iota$
is the map 
defined in \eqref{iotasPDAW}. The elements of  $\iota(F_\lambda^*)$ act on $\sP(W)$ as 
$K_n$-invariant constant coefficient differential operators, and by considering the $\bfG_r$-action it follows that they map $\sP_\mathbf m(W)$ into $\sP_{\chi}(W)$,
where 
\[
\chi=(m-\lambda_r,\ldots,m-\lambda_1).
\] Therefore the map 
$D_\lambda:\sP_\mathbf m(W)\to \sP_\mathbf m(W)$ factors through a $K_n$-equivariant map
\begin{equation}
\label{map-PmPchhh}
\sP_\mathbf m(W)\to \sP_\chi(W).
\end{equation}
Suppose that $(D_\lambda\circ\boldsymbol j_m)V_\eta\neq\{0\}$.
Since the map \eqref{map-PmPchhh} is $K_n$-equivariant,  $[V_\eta:\sP_{\chi}(W)]_{\bfK_n}>0$. From
 Proposition \ref{VetaPm+l*} it follows that 
$\lambda\subseteq \nu$, which is a contradiction. 
\end{proof}
\section{Proof of Theorem 
\ref{maintheorem}
}
\label{Sec-pf-Main-Thm}
In this section we complete the proof of  Theorem \ref{maintheorem}. We start by proving the following lemma.
\begin{lem}
\label{lem66.1}
Let $\lambda\in\cP_r$. Then there exists a constant $\gamma_\lambda'\in\C$ such that for every $s\in\C$, and every 
$\mu\in\cP_r$,
\begin{equation}
\label{clas=alaPipla}
c_{\lambda,s}(\mu)=
\gamma_\lambda'
P_\lambda\left(\mu+\varrho;\frac{d}{2},s-\varrho_1\right)
.
\end{equation}
\end{lem}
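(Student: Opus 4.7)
The plan is to combine Lemma \ref{clmusdd+r}, Proposition \ref{PrpVanCondd}, and the uniqueness part of Proposition \ref{prpuniqPip}. First, I would invoke Lemma \ref{clmusdd+r} to write $c_{\lambda,s}(\mu) = d_\lambda(\mu+\varrho, s)$, where $d_\lambda(x,s)$ is a polynomial which is even-symmetric in $x=(x_1,\ldots,x_r)$ and whose $x$-degree is at most $2|\lambda|$. Next, taking $\tau := d/2$ and $\alpha := s-\varrho_1$, Proposition \ref{PrpVanCondd} gives $d_\lambda(\nu + \varrho_{\tau,\alpha}, s) = 0$ whenever $\lambda \nsubseteq \nu$. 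In particular this vanishing holds for every $\nu \in \cP_r$ with $|\nu| \leq |\lambda|$ and $\nu \neq \lambda$, since $\lambda \subseteq \nu$ combined with $|\nu| \leq |\lambda|$ would force $\lambda = \nu$.

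For each fixed real number $s_0 > \varrho_1$ both $\tau = d/2$ and $\alpha_0 := s_0 - \varrho_1$ are positive, so the polynomial $x \mapsto d_\lambda(x, s_0)$ satisfies all the hypotheses of Proposition \ref{prpuniqPip}. Its uniqueness assertion then yields a scalar $\gamma'_\lambda(s_0) \in \C$ with
\[
d_\lambda(x, s_0) = \gamma'_\lambda(s_0)\, P_\lambda\bigl(x;\,\tfrac{d}{2},\,s_0 - \varrho_1\bigr)
\]
as polynomials in $x$. The main task is then to show that $\gamma'_\lambda(s)$ does not depend on $s$. Because $P_\lambda$ is $2\lambda$-monic, $\gamma'_\lambda(s)$ equals the coefficient of $x_1^{2\lambda_1}\cdots x_r^{2\lambda_r}$ in $d_\lambda(x,s)$, and this coefficient sits inside the top-degree homogeneous component of $d_\lambda$, which is nothing but the leading symbol of the Harish-Chandra image of $D_{\lambda,s}$. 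Since $\widetilde D_{\lambda,s} = \Psi^s \widetilde D_\lambda \Psi^{-s}$ is obtained from $\widetilde D_\lambda$ by conjugation by the nonvanishing function $\Psi^s$, the principal symbols of $\widetilde D_{\lambda,s}$ and $\widetilde D_\lambda$ coincide, and therefore so do the top-degree parts of $\mathrm{HC}(D_{\lambda,s})$ and $\mathrm{HC}(D_\lambda)$. Hence the coefficient of $x^{2\lambda}$ in $d_\lambda(x,s)$ is independent of $s$, so $\gamma'_\lambda(s) = \gamma'_\lambda$ is a constant.

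To finish, the identity \eqref{clas=alaPipla} established so far for real $s > \varrho_1$ extends to all $s \in \C$ by polynomial continuation: for fixed $\mu$ both sides are polynomials in $s$ (the left by Lemma \ref{clmusdd+r}, the right by the combinatorial formula \eqref{formcmbPip}), so their agreement on a real half-line forces agreement everywhere. The main obstacle in the plan is the constancy of $\gamma'_\lambda$; the argument relies on identifying the top-degree part of $d_\lambda(x,s)$ with the principal symbol of $\widetilde D_{\lambda,s}$ and on the fact that conjugation by a nonvanishing scalar function leaves the principal symbol untouched.
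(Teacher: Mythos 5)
Your argument is correct and reaches the conclusion, but the mechanism you use to show $\gamma'_\lambda$ is independent of $s$ is genuinely different from the paper's. Both proofs start identically: combine Lemma \ref{clmusdd+r}, Proposition \ref{PrpVanCondd}, and the uniqueness part of Proposition \ref{prpuniqPip} (applied with $\tau=d/2>0$, $\alpha_0=s_0-\varrho_1>0$) to get a scalar $\gamma'_\lambda(s_0)$ for each admissible $s_0$. The divergence is in the constancy step. The paper argues combinatorially: it first observes, by comparing coefficients of the monic monomial $x^{2\lambda}$, that $\gamma'_\lambda$ is a polynomial in $s$; it then reads off from the explicit formula \eqref{formcmbPip} that the $s$-degree of $P_\lambda(\mu+\varrho;\tfrac{d}{2},s-\varrho_1)$ is exactly $2|\lambda|$, while Lemma \ref{clmusdd+r} caps the $s$-degree of $c_{\lambda,s}(\mu)$ at $2|\lambda|$; matching $s$-degrees on both sides of \eqref{clas=alaPipla} forces $\gamma'_\lambda$ to be constant. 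You instead invoke a structural fact: the degree-$2|\lambda|$ homogeneous part of $\mathrm{HC}(D_{\lambda,s})$ equals the restriction to $\g a_\C^*$ of the principal symbol of $D_{\lambda,s}$, and conjugation of $\widetilde D_\lambda$ by the nonvanishing function $\Psi^s$ leaves the principal symbol unchanged, so the coefficient of $x^{2\lambda}$ in $d_\lambda(x,s)$ is $s$-independent. This is a legitimate, more conceptual route, though it rests on a nontrivial theorem (Helgason's description of the leading term of the Harish-Chandra homomorphism as the Chevalley restriction of the principal symbol), which you should cite explicitly (e.g. \cite[Ch.~II]{Helg}). Your approach has the advantage of not needing the explicit $s$-degree computation from \eqref{formcmbPip}; the paper's approach stays entirely inside the combinatorial framework already in play and avoids appealing to the symbol calculus. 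Both treatments handle the extension from real $s>\varrho_1$ to all $s\in\C$ by polynomiality, so that part is essentially the same.
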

\begin{proof}
First 
fix $s>\frac{1}{2}\varrho_1$. 
From 
Proposition \ref{PrpVanCondd}
and Lemma \ref{clmusdd+r} we obtain
\begin{equation*}
\textstyle
c_{\lambda,s}\left(\nu+\left(\frac{d}{2}(r-1)+s-2\varrho_1\right)
\mathbbm 1\right)=0\,\text{ if }|\nu|\leq |\lambda|\text{ and }\nu\neq \lambda.
\end{equation*}
From the vanishing part of the statement of 
Proposition \ref{prpuniqPip}
it follows that 
the polynomial 
$
q_\lambda(x):=
P_\lambda\left(
x+
\varrho;\frac{d}{2},s-\varrho_1\right)
$
also vanishes for
all $x:=\nu+\left(\frac{d}{2}(r-1)+s-2\varrho_1\right)\mathbbm 1$, where $\nu$ 
satisfies $|\nu|\leq |\lambda|$ and $\nu\neq \lambda$. The uniqueness part of the statement of 
Proposition \ref{prpuniqPip} now implies 
that there exists a scalar $\gamma_\lambda'\in \C$, possibly depending on $s$,  
such that \eqref{clas=alaPipla} holds for every $\mu$. Since both sides of \eqref{clas=alaPipla} are polynomials in $s$ and $\mu$
(see Lemma \ref{clmusdd+r}), and
\eqref{clas=alaPipla} holds for a
Zariski dense subset of values $(\mu,s)\in \C^r\times \C$, it follows that  
\eqref{clas=alaPipla} indeed holds for every $s\in\C$, and $\gamma_\lambda'$ is a rational function of $s\in\C$.

Next we show that  $\gamma_\lambda'$ does not depend on $s$.
Since the value of $c_{\lambda,s}(\mu)$ is a polynomial 
in $s$ and $\mu$, and  
$P_\lambda(\mu+\varrho;\frac{d}{2},s-\varrho_1)$ is $2\lambda$-monic in $\mu$, it follows that $\gamma_\lambda'$ is a polynomial in $s$. 
However, from the combinatorial formula for $P_\lambda$ that is given in \eqref{formcmbPip}, it follows that the degree of $s$ in
$P_\lambda\left(\mu+\varrho;\frac{d}{2},s-\varrho_1\right)$
 is exactly $2|\lambda|$, whereas the degree of $s$ in $c_{\lambda,s}(\mu)$ is at most $2|\lambda|$.
By comparing the degrees 
of $s$ on both sides of \eqref{clas=alaPipla}, it follows that $\gamma_\lambda'$ is a constant independent of $s$.
\end{proof}


To complete the proof of
Theorem \ref{maintheorem}, we need to 
prove that $\gamma_\lambda'=\gamma_\lambda$, where
$\gamma_\lambda$ is defined in \eqref{defConstgamla}.
The rest of this section is devoted to the proof of the latter claim.

Set $t:=\dim_\R(A)$ and let $v_1,\ldots,v_t$ be an orthonormal basis for $A$ with respect to the pairing $
(\cdot,\cdot)_A^{}$. Set $\varphi_i:=(\cdot,v_i)_A^{}\in A^*$ for every $1\leq i\leq  t$, so that $\varphi_i(v_j)=\delta_{i,j}$.
For every $1\leq i\leq t$,
let $\partial_{v_i}\in\sD(A)$ denote the directional derivative corresponding to $v_i$, so that $\partial_{v_i}(\varphi_j)=\varphi_j(v_i)=\delta_{i,j}$.  
For every $1\leq i\leq t$, set $q_i:=\mapvWA^*(\varphi_i)$
where $\mapvWA^*$ is as in \eqref{q*PAPW}, and let $\partial_{q_i}\in \sD(W)$ denote the second-order differential operator  corresponding to $q_i$ under the isomorphism $\sP(W)\simeq \sD(W)$  defined in
\eqref{PDWPDA}.

Fix an integer $m\geq 1$ and set
\begin{equation}
\label{D(m)}
D^{(m)}:=\sum_{|\lambda|=m}\widetilde D_\lambda,
\end{equation}
where $\widetilde D_\lambda$ is defined in \eqref{DtildeLam}.
Then $D^{(m)}$ acts on every $K$-module $V_\mu\subseteq C^\infty(Y)\simeq C^\infty(X)^{G_r}$ by the scalar \[
c^{(m)}(\mu):=\sum_{|\lambda|=m}c_{\lambda,0}(\mu).
\] 
From the definition of the operators $\widetilde{D}_\lambda$ it follows that if  
$\{\widetilde{v}_j\}$ is a basis for $\sP^m(A)$ and
$\{\widetilde{\partial}_j\}$ is the corresponding dual basis 
for $\sD^m(A)\simeq \sP^m(A)^*$, then 
$D^{(m)}=\sum_j \iota(\widetilde{v}_j)
\iota(\widetilde{\partial}_j)$
where $\iota$ is as in \eqref{iotasPDAW}. In particular,
if we choose the basis  
\[
\big\{(\partial_{v_1})^{m_1}\cdots (\partial_{v_t})^{m_t}\, :\, m_1+\cdots+m_t=m\big\}
\]
for $\sD^m(A)$, then 
the corresponding dual basis for $\sP^m(A)$ will be
\[
\left\{
\frac{1}{m_1!\cdots m_t!}\varphi_1^{m_1}\cdots \varphi_t^{m_t}
\, :\, m_1+\cdots+m_t=m
\right\},
\]
and thus
\begin{align*}
D^{(m)}&=\frac{1}{m!}\sum_{m_1+\cdots+m_t=m}{m\choose m_1,\ldots,m_t}q_1^{m_1}\cdots q_t^{m_t}(\partial_{q_1})^{m_1}\cdots
(\partial_{q_t})^{m_t}\\
&=
\frac{1}{m!}(q_1\partial_{q_1}+\cdots+q_t\partial_{q_t})^m+D'
=
\frac{1}{m!}\big(D^{(1)}\big)^m+D',
\end{align*}
where $D'$ is a differential operator with order strictly less than the order of $D$.


Let $\g g_\C$ be the Lie algebra of $\bfG_n$, and let
$\Omega_\g k$ and $\Omega_\g g$ denote the Casimir operators of 
$\g k_\C$ and $\g g_\C$ (see the Appendix  for more information).
Also, Let $\mathbf E$ be the degree (or Euler) operator on $W$. The operator $D^{(1)}$ is a 
polynomial-coefficient differential operator on $W_\C$.
The next proposition  expresses $D^{(1)}$  in terms of $\Omega_{\g k}$, $\Omega_{\g g}$, and $\mathbf E$.
\begin{prp}
\label{prpAppndx}
Let  $D^{(1)}$ be defined as in \eqref{D(m)}. Then
\begin{equation}
\label{CalcOpD(1)}
D^{(1)}=\begin{cases}
-2(n-2)\Omega_{\g k}+\Omega_{\g g}-\mathbf E&\text{ if }\F=\R,\\
-2\Omega_{\g k}+2\Omega_{\g g}+(2n-2r)\mathbf E&\text{ if }\F=\C,\\
-8(2n+1)\Omega_{\g k}+2\Omega_{\g g}+2(2n-2r+1)\mathbf E&\text{ if }\F=\qH.
\end{cases}
\end{equation}
\end{prp}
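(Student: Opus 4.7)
The proof is a direct computation in matrix coordinates. The plan is to express both sides of \eqref{CalcOpD(1)} as explicit polynomial-coefficient second-order differential operators on $W_\C$ and match them term by term. Both sides are $\bfK_n\times \bfG_r$-invariant (the left side by construction, the right side because $\Omega_{\g g}$ and $\Omega_{\g k}$ commute with the $\bfG_r$-action on $\sP(W)$ and with $\mathbf E$), so in principle one could pass to isotypic components of the $(\bfG_n,\bfG_r)$ Howe-dual decomposition and compare scalars; however, the direct computation is just as efficient and produces the three explicit coefficient formulas simultaneously.

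First, I would make $D^{(1)}=\sum_{i=1}^t q_i\partial_{q_i}$ completely concrete. Using Table 3, choose a basis of $A$ adapted to the matrix realization (symmetric matrix units for $\F=\R$; a basis of $\mathrm{Mat}_{r\times r}(\C)$ for $\F=\C$; skew-symmetric matrix units for $\F=\qH$), compute the dual basis with respect to $(\cdot,\cdot)_A$, and write out the functions $q_i=\mapvWA^*(\varphi_i)$ as quadratic polynomials in the entries of $x\in W_\C$. The isomorphism $\iota_W$ from \eqref{PDWPDA} then identifies $\partial_{q_i}$ with the constant-coefficient second-order operator obtained from $q_i$ itself via the pairing $(\cdot,\cdot)_W$, so $\sum_i q_i\partial_{q_i}$ unfolds into an explicit expression involving pairs of matrix entries contracted against corresponding second derivatives.

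Next, I would realize the $\g g_\C$- and $\g k_\C$-actions on $\sP(W)$ as polynomial-coefficient first-order differential operators. Under Table 2, the natural action of $\bfG_n$ on $W_\C$ is by left matrix multiplication on the ambient complex matrix space, whose derivative sends a matrix unit $E_{ij}$ to a first-order operator of the form $\sum_k x_{ik}\partial_{x_{jk}}$. Assembling $\Omega_{\g g}$ from these and normal-ordering yields an explicit polynomial-coefficient second-order operator together with an Euler-type correction, and the same recipe applied to a basis of $\g k_\C\subset\g g_\C$ compatible with the orthogonal, diagonal, or symplectic embedding from Remark \ref{rmk-2.2} produces $\Omega_{\g k}$. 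The conceptually clean step is to use the Cartan decomposition $\g g_\C=\g k_\C\oplus\g p_\C$: with a suitable normalization one has $\Omega_{\g g}=\Omega_{\g k}+\Omega_{\g p}$, where $\Omega_{\g p}$ is the sum $\sum_\alpha X_\alpha X^\alpha$ over dual bases of $\g p_\C$. Choosing the $\g p_\C$-basis so that its action on $W_\C$ is compatible with the quadratic map $\mapvWA$ (the Jordan-theoretic structure on $A_\C$), the operator $\Omega_{\g p}$, once normal-ordered, becomes proportional to $\sum_i q_i\partial_{q_i}$, and $\mathbf E$ absorbs the first-order remainder.

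The main obstacle will be careful scalar bookkeeping. The three cases involve different invariant bilinear forms on $A$ (the symmetric trace pairing with distinct weights on diagonal and off-diagonal entries for $\F=\R$; the symplectic-type pairing on skew matrices for $\F=\qH$), different normalizations of the Killing form on $\g k_\C$ relative to $\g g_\C$ (the relevant ratios produce the coefficients $-2(n-2)$, $-2$, and $-8(2n+1)$ in front of $\Omega_{\g k}$), and normal-ordering corrections whose magnitudes depend on $\dim\g p_\C$ and on $n,r,d$. Once a uniform convention is fixed, the three cases run in parallel, but each requires separate verification because the explicit forms of $q_i$ and of the embedding $\bfK_n\hookrightarrow\bfG_n$ differ.
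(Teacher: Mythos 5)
Your main plan --- writing $D^{(1)}=\sum_i q_i\partial_{q_i}$, $\Omega_{\g g}$, $\Omega_{\g k}$, and $\mathbf E$ explicitly as polynomial-coefficient differential operators in matrix coordinates and matching them case by case --- is exactly what the paper's Appendix does, including the explicit orthonormal bases of $A$, the formulas for $\mapvWA^*$, the realization of the $\g g_\C$-action by polarization operators, and the separate treatment of $\F=\R,\C,\qH$. In that respect your proposal is sound and coincides with the paper's proof.

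However, the ``conceptually clean'' Cartan-decomposition shortcut you propose does not work as stated. Normalizing so that $\Omega_{\g g}=\Omega_{\g k}^{\tr}+\Omega_{\g p}$, where $\Omega_{\g k}^{\tr}$ is the Casimir of $\g k_\C$ for the restriction of the trace form of $\g g_\C$, the normal-ordered $\Omega_{\g p}$ is \emph{not} proportional to $\sum_i q_i\partial_{q_i}$ modulo first-order operators. Already for $\F=\R$ and $r=1$, writing $q:=\sum_i y_i^2$ one has $D^{(1)}=\sum_{i,j}y_i^2\partial_j^2$ while
\[
\Omega_{\g p} = \frac{n+1}{2}\,\mathbf E + \sum_i y_i^2\partial_i^2 + \frac{1}{2}\sum_{i\neq j}y_i^2\partial_j^2 + \frac{1}{2}\sum_{i\neq j}y_iy_j\partial_i\partial_j,
\]
so the principal symbol of $\Omega_{\g p}$ contains the cross term $\sum_{i\neq j}y_iy_j\partial_i\partial_j$ absent from $D^{(1)}$, and the diagonal and off-diagonal $y_i^2\partial_j^2$ coefficients disagree ($1$ versus $\frac{1}{2}$). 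The discrepancy is an honest $\Omega_{\g k}$ correction: for $\F=\R$, the final formula rewritten with the trace normalization reads $D^{(1)}=\Omega_{\g g}-2\Omega_{\g k}^{\tr}-\mathbf E$, and because the coefficients $+1$ and $-2$ are not equal and opposite, $D^{(1)}$ cannot be of the form $c\,\Omega_{\g p}$ plus first-order operators. So there is no $\g p$-side shortcut, and the term-by-term comparison from your first paragraph is the substance of the proof.
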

\begin{proof}
The proof is by a tedious calculation and is deferred to the Appendix.
\end{proof}

\begin{lem}
\label{lem-ev-D(1)} The scalar $c^{(1)}(\mu)$ is a quadratic polynomial in $\mu_1,\ldots,\mu_r$ with top-degree homogeneous part equal to $-4\left(\mu_1^2+\cdots+\mu_r^2\right)$. 
\end{lem}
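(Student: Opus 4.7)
By Lemma \ref{clmusdd+r} applied to the unique partition $\lambda=(1,0,\ldots,0)\in\cP_r$ with $|\lambda|=1$, the scalar $c^{(1)}(\mu)=c_{(1,0,\ldots,0),0}(\mu)$ equals $d_{(1)}(\mu+\varrho,0)$ for a polynomial $d_{(1)}(x,s)$ that is even-symmetric in $x=(x_1,\ldots,x_r)$ of degree at most $2$. Since every even-symmetric polynomial of degree $\leq 2$ in $r$ variables has the form $A+B\sum_i x_i^2$, we obtain $c^{(1)}(\mu)=A+B\sum_i(\mu_i+\varrho_i)^2$, a quadratic polynomial in $\mu$ whose top-degree homogeneous part is $B\sum_i\mu_i^2$. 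The proof thus reduces to showing $B=-4$.

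To identify $B$, invoke Proposition \ref{prpAppndx} and write $D^{(1)}=c_1\Omega_{\g k}+c_2\Omega_{\g g}+c_3\mathbf E$ with $c_1,c_2,c_3$ the explicit constants listed there; then analyze each summand on $V_\mu$ realized inside $C^\infty(Y)\simeq C^\infty(X)^{G_r}$. First, $\mathbf E$ acts by $0$, because every $G_r$-invariant function is homogeneous of total degree $0$ in the complex coordinates of $W$ (the center of $G_r$ scales $W$). Second, $Y\simeq G_n/P$ for the parabolic subgroup $P\subset G_n$ stabilizing a fixed $r$-subspace, so $C^\infty(Y)\simeq\mathrm{Ind}_P^{G_n}(\mathbf 1)$ is a smooth (degenerate) principal series with a single infinitesimal character; consequently the central element $\Omega_{\g g}\in Z(U(\g g_\C))$ acts on $C^\infty(Y)$ by one fixed scalar, independent of $\mu$, and so contributes only a constant to $c^{(1)}(\mu)$. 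Third, $\Omega_{\g k}$ acts on $V_\mu$ by the standard Casimir eigenvalue $\kappa(\mu)=\langle\widetilde\mu,\widetilde\mu+2\rho_{\g k}\rangle$ relative to the invariant form on $\g k_\C^*$ implicit in the definition of $\Omega_{\g k}$; using $\widetilde\mu|_{\g a_\C}=2\sum_i\mu_i\sfe_i$ and $\widetilde\mu|_{\g t_\C}=0$, one reads off that the leading $\mu^2$-part of $\kappa(\mu)$ is $\langle\widetilde\mu,\widetilde\mu\rangle=4\sum_i\langle\sfe_i,\sfe_i\rangle\mu_i^2$, the $\sfe_i$'s being orthogonal under this form and sharing a common squared length by the $S_r$-symmetry of $\mathsf W$.

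Combining the three observations, only $\Omega_{\g k}$ contributes a $\mu^2$-dependent term to $c^{(1)}(\mu)$, so $B=4c_1\langle\sfe_1,\sfe_1\rangle$. Verifying $B=-4$ therefore reduces to checking that $c_1\langle\sfe_1,\sfe_1\rangle=-1$ in each of the three cases $\F\in\{\R,\C,\qH\}$, using the explicit values of $c_1$ from Proposition \ref{prpAppndx} together with the normalization of the invariant form on $\g k_\C^*$ that is fixed in the Appendix. The main obstacle is precisely this normalization check: although the individual factors $c_1$ and $\langle\sfe_1,\sfe_1\rangle$ depend on $n$ and on $\F$, their product must come out uniformly to $-1$ in each case, and confirming this provides a useful internal consistency check on the Casimir decomposition of $D^{(1)}$ furnished by Proposition \ref{prpAppndx}.
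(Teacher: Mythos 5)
Your approach is essentially the same as the paper's: decompose $D^{(1)}$ via Proposition \ref{prpAppndx}, observe that $\mathbf E$ kills the relevant functions, that $\Omega_{\g g}$ contributes only a $\mu$-independent constant via the degenerate principal series infinitesimal character, and that the $\mu^2$-dependence therefore comes entirely from the Casimir $\Omega_{\g k}$ acting on $V_\mu$ by $\langle\widetilde\mu,\widetilde\mu+2\rho_\g k\rangle$. Two small remarks. First, your justification for $\mathbf E = 0$ (the connected center of $G_r$ acts on $W$ by scaling, so $G_r$-invariant functions are killed by the Euler vector field) is correct and arguably more direct than the paper's, which instead embeds $V_\mu$ into $\sR_\mathbf m(W) = \Psi^{-m}\sP_\mathbf m(W)$ using Proposition \ref{propR^mKdec} and notes these functions are homogeneous of degree zero; both routes are valid. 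Second, you reduce the conclusion to the verification that $c_1\langle\sfe_1,\sfe_1\rangle = -1$ in each of the three cases but do not actually carry it out; the paper closes this by recording, in \eqref{ActionOkk}, the top-degree Casimir eigenvalue $\frac{2}{n-2}\sum\mu_i^2$, $2\sum\mu_i^2$, $\frac{1}{4n+2}\sum\mu_i^2$ for $\F=\R,\C,\qH$ respectively, which combined with the coefficients $-2(n-2)$, $-2$, $-8(2n+1)$ from \eqref{CalcOpD(1)} indeed gives $-4$ uniformly. So the structure of your argument is sound, but the numerical check it explicitly defers is precisely the nontrivial content that completes the lemma.
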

\begin{proof}
We use the expressions for $D^{(1)}$ given in Proposition \ref{prpAppndx}.
As is well known (e.g., see \cite[Lem. 3.3.8]{GW09}), the action of $\Omega_\g k$ on  $V_\mu$ is by a scalar which is a quadratic polynomial in $\mu_1,\ldots,\mu_n$, whose top homogeneous term is
given by
\begin{equation}
\label{ActionOkk}
\begin{cases}
\frac{2}{n-2}(\mu_1^2+\cdots+\mu_r^2)&\text{ if }\F=\R,\\
2(\mu_1^2+\cdots+\mu_r^2)&\text{ if }\F=\C,\\
\frac{1}{4n+2}(\mu_1^2+\cdots+\mu_r^2)&\text{ if }\F=\qH.\\
\end{cases}
\end{equation}
Fix $m\in\Z$ such that  $m\geq \mu_1$, and let $\sR_\mathbf m(W)$ be as in \eqref{dfnofRM}. 
From Proposition \ref{propR^mKdec} it follows that $[V_\mu:\sR_\mathbf m(W)]_{\bfK_n}^{}>0$. Since elements of $\sR_\mathbf m(W)$ are homogeneous of degree zero, the degree operator $\mathbf E$ vanishes  on $V_\mu$. 
Furthermore, we have $V_\mu\subseteq C^\infty(Y)_{K_n\text{-finite}}^{}$, and $Y\simeq G_n/P_{r,n}$ where $P_{r,n}$ is the $(r,n-r)$ parabolic subgroup of $G_n$. Thus, 
$C^\infty(G_n/P_{r,n})_{K_n\text{-finite}}^{}$ is the space of $K_n$-finite vectors of a degenerate principal series representation of $G_n$ induced from $P_{r,n}$, and the operator $\Omega_\g g$ acts on 
the latter space by a scalar that is independent of $\mu$ (see \cite[Prop. 8.22]{Knapp}). Consequently, the top-degree homogeneous part of $c^{(1)}(\mu)$ is determined by the action of $\Omega_\g k$.
The  lemma now follows from
\eqref{CalcOpD(1)} and 
\eqref{ActionOkk}. 
  \end{proof}
Lemma \ref{lem-ev-D(1)} implies that
the action of $D^{(m)}$ on $V_\mu$ is by a polynomial in $\mu_1,\ldots,\mu_r$ of degree $2m$, whose top-degree homogeneous part is 
$
\frac{(-4)^m}{m!}(\mu_1^2+\cdots+\mu_r^2)^m.
$
On the other hand, from \cite[Eq. (7.3)]{Koornwinder}
it follows that for every $\lambda$ such that $|\lambda|=m$, the top-degree homogeneous part of $c_{\lambda,0}(\mu)$ is up to a scalar equal to $\mathsf  P_\lambda(\mu^2,\frac{d}{2})$,
where $\mathsf P_\lambda(x,\tau)$ is the $\lambda$-monic Jack polynomial and $\mu^2:=(\mu_1^2,\ldots,\mu_r^2)$.
Let $J_\lambda$ denote the normalization of the Jack polynomial introduced in 
\cite[Thm 1.1]{StanleyP}. The scalar relating  $J_\lambda$ and $\mathsf P_\lambda$ is given in \cite[Chap. VI, Eq. (10.22)]{Macdonald} (see also \cite[Thm 5.6]{StanleyP}).
From \cite[Prop. 2.3]{StanleyP}, and the relation between $\mathsf P_\lambda$ and 
$J_\lambda$, it follows that
\begin{equation}
\label{SeqSJJ}
(\mu_1^2+\cdots+\mu_r^2)^m=\sum_{|\lambda|=m}
\left(\frac{d}{2}\right)^m
\frac{m!}{\prod_{\flat\in\lambda}
\left(
\frac{d}{2}(a_\lambda(\flat)+1)
+l_\lambda(\flat)
\right)}
\mathsf  P_\lambda\left(\mu^2,\frac{d}{2}\right).
\end{equation}
Since the polynomials $\mathsf{P}_\lambda(\mu^2,\frac{d}{2})$ are linearly independent, by considering the top-degree homogeneous parts of both sides of \eqref{SeqSJJ} it follows that for every $\lambda\in\cP_r$ such that $|\lambda|=m$,
\[
c_{\lambda,0}(\mu)=
\frac{(-2d)^m}{\prod_{\flat\in\lambda}
\left(
\frac{d}{2}(a_\lambda(\flat)+1)
+l_\lambda(\flat)
\right)}
P_\lambda\left(\mu+\varrho;\frac{d}{2},-\varrho_1\right).
\]
Lemma \ref{lem66.1}  completes the proof of  Theorem \ref{maintheorem}.


\section*{Appendix: Proof of Proposition \ref{prpAppndx}}
In this Appendix, we exhibit the details of the calculations that yield the formulas \eqref{CalcOpD(1)}
for the operator $D^{(1)}$. 

Let $\kappa:\g k_\C\times\g k_\C\to\C$ denote the invariant bilinear form which is equal to the Killing form of $\g k_\C$ when $\F=\R$ or $\qH$, and is given by $\kappa(x,y):=\tr(xy)$ when $\F=\C$.
The Casimir operator of $\g k_\C$ is  $\Omega_\g k:=\sum_{i=1}^{\dim\g k_\C}x_ix^i$, where $\{x_i\,:\,1\leq i\leq \dim\g k_\C\}$
is a basis for $\g k_\C$, and $\{x^i\,:\,1\leq i\leq \dim\g k_\C\}$ is the corresponding dual basis with respect to $\kappa(\cdot,\cdot)$.
We define the Casimir operator $\Omega_\g g$
of $\g g_\C$ similarly. Explicit formulas for $\Omega_\g g$ are given in 
\eqref{CASgl(n))}, \eqref{CASgl(nC))}, and \eqref{CASgl(nH))}.
In the following, $\sfE_{i,j}$ will always denote a matrix with a 1 in the $(i,j)$ position and 0's elsewhere (the number of rows and columns of 
$\sfE_{i,j}$ will be clear from the context). 

\textbf{Case I:} $\F=\R$. Recall that in this case $A=\mathrm{Sym}_{r\times r}(\R)$. The  orthonormal basis of 
$A$ with respect to $(\cdot,\cdot)_A^{}$ is
\[
\sfE_{i,i}\text{ for }1\leq i\leq r\ \text{ and }\ 
\frac{1}{\sqrt{2}}(\sfE_{i,j}+\sfE_{j,i})\text{ for }
1\leq i< j\leq r.
\]
We fix generators 
$
x_{i,j}\in\sP(A)
$, where $1\leq i\leq j\leq r$,
and 
$y_{i,j}\in\sP(W)$, where $1\leq i\leq n$ and $1\leq j\leq r$,
such that
$x_{i,j}([t_{a,b}]):=t_{i,j}$
for every matrix
$[t_{a,b}]\in A$,  and 
$y_{i,j}([t_{a,b}]):=t_{i,j}$ for  every matrix
$[t_{a,b}]\in W$.
The isomorphism $\iota_A^{}:A\to A^*\simeq\sP^1(A)$ that is defined in
\eqref{iotaAWs} is given by
\[
\sfE_{i,i}\mapsto x_{i,i}\text{ for }
1\leq i\leq r
\ \text{ and }\ 
\frac{1}{\sqrt{2}}(\sfE_{i,j}+\sfE_{j,i})\mapsto \sqrt{2}x_{i,j}\text{ for }
1\leq i<j\leq r.
\]
The map 
$\mapvWA^*
:\sP(A)\to \sP(W)
$ of \eqref{q*PAPW} is given by
\[
x_{a,a}\mapsto \sum_{i=1}^ny_{i,a}^2
\text{ for }1\leq a\leq r
\ \text{ and }\ 
\sqrt{2}x_{a,b}\mapsto \sqrt{2}\sum_{i=1}^r y_{i,a}y_{i,b}\text{ for }1\leq a<b\leq r.
\]
Finally, the isomorphism $\iota_W^{}:W\simeq W^*\simeq \sD^1(W)$ of
\eqref{iotaAWs} is given by $
y_{i,a}\mapsto \partial_{i,a}:=\frac{\partial}{\partial{y_{i,a}}}$.
From all of the above, it follows that
\begin{equation}
\label{D(1)R}
D^{(1)}
=\sum_{a=1}^r 
\left(\sum_{i=1}^n y_{i,a}^2\right)
\left(\sum_{j=1}^n \partial_{j,a}^2\right)
+\sum_{1\leq a\neq b\leq r}
\left(\sum_{i=1}^n y_{i,a}y_{i,b}\right)
\left(\sum_{j=1}^n \partial_{j,a}\partial_{j,b}\right).
\end{equation}
The embedding 
 $\g k_\C\into\g{gl}_n(\C)$
 gives the realization of $\g k_\C$ as
\begin{equation*}
\g k_\C=\left\{x\in\mathrm{Mat}_{n\times n}(\C)\ :\ x+x^T=0\right\},
\end{equation*}
where $x^T$ is the transpose of $x$. 
Recall that by the definition of $\Omega_\g g$, \begin{equation}
\label{CASgl(n))}
\Omega_\g g=\sum_{1\leq i,j\leq n}
\sfE_{i,j}\sfE_{j,i}.
\end{equation}
The Killing form of $\g k_\C$ is  $\kappa(x,y):=(n-2)\tr(xy)$, and 
therefore 
\begin{equation}
\label{CasimirR}
\Omega_{\g k}=-\frac{1}{2(n-2)}
\sum_{1\leq i<j\leq n}
(\sfE_{i,j}-\sfE_{j,i})^2.
\end{equation}
The action of every $\sfE_{i,j}\in \g{gl}_n(\C)$ on $\sP(W)$, corresponding to the derived action of $\bfG_n$, is given by polarization operators, that is 
\begin{equation}
\label{POLIZR}
\sfE_{i,j}=-
\sum_{a=1}^ry_{j,a}\partial_{i,a}.
\end{equation}
Finally, the action of the degree operator is given by $\mathbf E=\sum_{i=1}^n\sum_{j=1}^ry_{i,j}\partial_{i,j}
$. Thus, expanding the right hand side of 
\eqref{D(1)R}, and using \eqref{CASgl(n))}, \eqref{CasimirR},  and \eqref{POLIZR}, we obtain 
$D^{(1)}=-2(n-2)\Omega_{\g k}+\Omega_{\g g}-\mathbf E$.

\textbf{Case II:} $\F=\C$.
In this case $A$ is the space of $r\times r$ complex Hermitian matrices. The orthonormal basis of 
$A$ with respect to $(\cdot,\cdot)_A^{}$ is
\[
\sfE_{i,i}\text{ for }1\leq i\leq r,\ \,
\frac{1}{\sqrt{2}}(\sfE_{i,j}+\sfE_{j,i})\text{ for }
1\leq i<j\leq r,\,\text{ and }
\frac{\sqrt{-1}}{\sqrt{2}}(\sfE_{i,j}-\sfE_{j,i})
\text{ for }1\leq i<j\leq r.
\]
Moreover, $W=\mathrm{Mat}_{n\times r}(\C)$, considered as a real vector space. Fix generators $x_{i,j},y_{i,j}\in\sP(W)$, $1\leq i\leq n$, $1\leq j\leq r$ for the algebra  $\sP(W)$, such that 
\[
x_{i,j}([t_{a,b}]):=\Re(t_{i,j})\text{ and }
y_{i,j}([t_{a,b}]):=\Im(t_{i,j}).
\]
The map 
$\mapvWA^*
:\sP(A)\to \sP(W)
$ of \eqref{q*PAPW} is given by
\[
\mapvWA^*(\iota_A^{}(S)):=
\begin{cases}
\sum_{a=1}^nx_{a,i}^2+y_{a,i}^2 &
\text{ if }S=\sfE_{i,i}\text{ where }1\leq i\leq r,\\[1mm] \sqrt{2}\sum_{a=1}^n
(x_{a,i}x_{a,j}+y_{a,i}y_{a,j})
&\text{ if }
S=\frac{1}{\sqrt{2}}(\sfE_{i,j}+\sfE_{j,i})
\text{ where }1\leq i<j\leq r,\\[1mm]
\sqrt{2}\sum_{a=1}^n
(-x_{a,j}y_{a,i}+x_{a,i}y_{a,j})
&\text{ if }
S=\frac{\sqrt{-1}}{\sqrt{2}}(\sfE_{i,j}-\sfE_{j,i})
\text{ where }1\leq i<j\leq r
.
\end{cases}
\]
For the realization of the derived action of 
$\bfG_n\simeq\mathrm{GL}_n(\C)\times \mathrm{GL}_n(\C)$
on $\sP(W)$  it will be more convenient to work with the coordinates $z_{i,j}$ and $\xi_{i,j}$ on $W_\C\simeq
\mathrm{Mat}_{n\times r}(\C)\oplus \mathrm{Mat}_{n\times r}(\C)$,  
where  $1\leq i\leq n$ and 
$1\leq j\leq r$,
given by
\[
z_{i,j}:=x_{i,j}-\sqrt{-1}y_{i,j}\,\text{ and }\,
\xi_{i,j}:=x_{i,j}+
\sqrt{-1}y_{i,j}.
\]
From these formulas it follows that
$
\frac{\partial}{\partial x_{i,j}}=
\frac{\partial}{\partial z_{i,j}}+
\frac{\partial}{\partial \xi_{i,j}}
$ and 
$
\frac{\partial}{\partial y_{i,j}}=
\sqrt{-1}
\left(\frac{\partial}{\partial \xi_{i,j}}-
\frac{\partial}{\partial z_{i,j}}\right)
$.

Set 
$\partial_{\xi_{i,j}}:=\frac{\partial}{\partial \xi_{i,j}}$
and 
$\partial_{z_{i,j}}:=\frac{\partial}{\partial z_{i,j}}$. By a direct calculation, we obtain 
\begin{align*}
D^{(1)}&=4\sum_{i=1}^r
\left(\sum_{a=1}^n
z_{a,i}\xi_{a,i}\right)
\left(\sum_{a=1}^n
\partial_{z_{a,i}}
\partial_{\xi_{a,i}}\right)\\
&+
2\sum_{1\leq i<j\leq r}
\left(
\sum_{a=1}^n
z_{a,i}\xi_{a,j}+z_{a,j}\xi_{a,i}
\right)
\left(
\sum_{a=1}^n
\partial_{z_{a,i}}\partial_{\xi_{a,j}}+
\partial_{z_{a,j}}\partial_{\xi_{a,i}}
\right)\\
&+2
\sum_{1\leq i<j\leq r}\left(
\sum_{a=1}^n
z_{a,i}\xi_{a,j}-z_{a,j}\xi_{a,i}
\right)
\left(
\sum_{a=1}^n
\partial_{z_{a,i}}\partial_{\xi_{a,j}}-
\partial_{z_{a,j}}\partial_{\xi_{a,i}}
\right).
\end{align*}
Next we describe the derived action of $\bfG_n$ on $\sP(W)$. 
The Lie algebra of $\bfG_n$ is isomorphic to the direct sum $\g{gl}_n(\C)\oplus\gl_n(\C)$.
We will denote the matrices in the standard bases of the left and right summands of this direct sum by  $\sfE_{i,j}^{(l)}$ and  
$\sfE_{i,j}^{(r)}$.
Recall that $\sP(W)$ can be 
identified with polynomials on the complex vector space $W_\C$. 
The actions of $\sfE_{i,j}^{(l)}$ and $\sfE_{i,j}^{(r)}$ on 
$\sP(W)$ are  by 
polarization operators
$
\sum_{a=1}^r
z_{i,a}
\partial_{z_{j,a}}
$ and $
-\sum_{a=1}^r\xi_{j,a}
\partial_{\xi_{i,a}}
$.
The embedding of $\g k_\C$ into the Lie algebra of $\bfG_n$ is the diagonal map
$
\g{gl}_n(\C)\into
\g{gl}_n(\C)\oplus\g{gl}_n(\C)
$. 
The  Casimir 
operators 
$\Omega_\g g$ and $\Omega_\g k$
are
 \begin{equation}
 \label{CASgl(nC))}
\Omega_\g g=\sum_{1\leq i,j\leq n}
\sfE_{i,j}^{(l)}\sfE_{j,i}^{(l)}
+
\sfE_{i,j}^{(r)}\sfE_{j,i}^{(r)}
\,\text{ and }\,
\Omega_{\g k}=
\sum_{1\leq i,j\leq n}
\left(
\sfE_{i,j}^{(l)}+\sfE_{i,j}^{(r)}
\right)
\left(
\sfE_{j,i}^{(l)}+\sfE_{j,i}^{(r)}
\right).
\end{equation}
Finally, the degree operator is
$
\mathbf E=\sum_{i=1}^n\sum_{j=1}^r
\left(z_{i,j}\partial_{z_{i,j}}+
\xi_{i,j}\partial_{\xi_{i,j}}
\right)
$.
With a calculation similar to Case I, we obtain
$
D^{(1)}=-2\Omega_{\g k}+2\Omega_{\g g}+(2n-2r)\mathbf E
$.

\textbf{Case III:} $\F=\qH$. 
The calculations are similar to the previous cases, only more elaborate. 
In this case, $A$ is the space of 
hermitian quaternionic matrices. Let $\{1,\mathbf i,\mathbf j,\mathbf k\}$ denote the standard $\R$-basis of $\qH$.
Then the  orthonormal basis of $A$ consists of 
$\sfE_{a,a}$, for $1\leq a\leq r$, and 
\[
\frac{1}{\sqrt{2}}(\sfE_{a,b}+\sfE_{b,a}),\,
\frac{\mathbf i}{\sqrt{2}}(\sfE_{a,b}-\sfE_{b,a}),\,
\frac{\mathbf j}{\sqrt{2}}(\sfE_{a,b}-\sfE_{b,a}),
\text{ and }
\frac{\mathbf k}{\sqrt{2}}(\sfE_{a,b}-\sfE_{b,a}),
\text{ for }1\leq a<b\leq r.
\]
Similar to Case II, it will be easier to use coordinates on $W_\C$. The embedding 
$W\into W_\C$ 
is given in matrix form by
\[
\mathrm{Mat}_{n\times r}(\qH)\into \simeq\mathrm{Mat}_{2n\times 2r}\ ,\
\Aa+\mathbf i\Bb+\mathbf j\Cc+\mathbf k\Dd
\mapsto 
\begin{bmatrix}
\Aa+\sqrt{-1}\Bb & -\Cc-\sqrt{-1}\Dd\\
\Cc-\sqrt{-1} \Dd& \Aa-\sqrt{-1}\Bb
\end{bmatrix},
\]
where $\Aa,\Bb,\Cc,\Dd\in\mathrm{Mat}_{n\times r}(\R)$.
The new coordinates on $W_\C$, and the relations between the corresponding  directional derivatives are as follows.
\begin{equation}
\label{newcoorqH}
\begin{cases}
\xi_{i,j}:=\Aa_{i,j}+\sqrt{-1}\Bb_{i,j},\\
\xi_{i+n,j+r}:
=\Aa_{i,j}-\sqrt{-1}\Bb_{i,j},\\
\xi_{i+n,j}:=\Cc_{i,j}-\sqrt{-1}\Dd_{i,j},\\
\xi_{i,j+r}:=
-\Cc_{i,j}-\sqrt{-1}\Dd_{i,j}.
\end{cases}
\text{ and }
\begin{cases}
\partial_{\Aa_{i,j}}=\partial_{\xi_{i,j}}+\partial_{\xi_{i+n,j+r}},\\
\partial_{\Bb_{i,j}}=\sqrt{-1}\left(
\partial_{\xi_{i,j}}-\partial_{\xi_{i+n,j+r}}\right),\\
\partial_{\Cc_{i,j}}=\partial_{\xi_{i+n,j}}-\partial_{\xi_{i,j+r}},\\
\partial_{\Dd_{i,j}}=-\sqrt{-1}\left(
\partial_{\xi_{i+n,j}}+
\partial_{\xi_{i,j+r}}\right).
\end{cases}
\end{equation}
Next we define
\[
\Phi_e(a,b):=
\begin{cases}
\sum_{i=1}^n
\xi_{i,a}\xi_{i+n,b+r}&\text{ if }e=1,\\
\sum_{i=1}^n
\xi_{i,a}\xi_{i+n,b}&\text{ if }e=2,\\
\sum_{i=1}^n
\xi_{i+n,a}\xi_{i,b+r}&\text{ if }e=3,\\
\sum_{i=1}^n
\xi_{i,a+r}\xi_{i+n,b+r}&\text{ if }e=4.
\end{cases}
\]
In the coordinates defined in \eqref{newcoorqH}, the map 
$\mapvWA^*
:\sP(A)\to \sP(W)
$ of \eqref{q*PAPW} is given by
\[
\mapvWA^*(\iota_A(S)):=
\begin{cases}
\Phi_1(a,a)-\Phi_3(a,a)
& \text{ if }S=\sfE_{a,a},\\
\frac{1}{\sqrt{2}}\left(
\Phi_1(a,b)+\Phi_1(b,a)-\Phi_3(a,b)-\Phi_3(b,a)\right)
& \text{ if }
S=\frac{1}{\sqrt{2}}(\sfE_{a,b}+\sfE_{b,a}),\\
\frac{\sqrt{-1}}{\sqrt{2}}\left(
\Phi_1(a,b)-\Phi_1(b,a)+\Phi_3(b,a)-\Phi_3(a,b)
\right)
&
\text{ if }S=\frac{\mathbf i}{\sqrt{2}}(\sfE_{a,b}-\sfE_{b,a}),\\
\frac{1}{\sqrt{2}}\left(
\Phi_2(a,b)-\Phi_4(b,a)-\Phi_2(b,a)+\Phi_4(a,b)
\right)
&
\text{ if }S=\frac{\mathbf j}{\sqrt{2}}(\sfE_{a,b}-\sfE_{b,a}),\\
\frac{1}{\sqrt{2}}\left(
\Phi_2(a,b)+\Phi_4(b,a)-\Phi_2(b,a)-\Phi_4(a,b)
\right)
&
\text{ if }S=\frac{\mathbf k}{\sqrt{2}}(\sfE_{a,b}-\sfE_{b,a}).
\end{cases}
\]
For $1\leq e\leq 4$, set 
$\partial\Phi_e(a,b):=
(\iota_W^{})^{-1}\left(\Phi_e(a,b)\right)$, where
$\iota_W:\sD(W)\to \sP(W)$ is the isomorphism given in \eqref{PDWPDA}. In fact $\partial\Phi_e(a,b)$ is the constant coefficient differential operator obtained from $\Phi_e(a,b)$ by substitution of each variable $\xi_{i,j}$ by the corresponding partial derivative 
$\partial_{\xi_{i,j}}$. Then
\begin{align*}
D^{(1)}&=
4\sum_{1\leq a,b\leq r}\left(\Phi_1(a,b)-\Phi_3(a,b)\right)
\left(\partial\Phi_1(a,b)-\partial\Phi_3(a,b)\right)\\
&+
2\sum_{1\leq a,b\leq r}\left(\Phi_2(a,b)-\Phi_2(b,a)\right)
\left(\partial\Phi_2(a,b)-\partial\Phi_2(b,a)\right)\\
&+2\sum_{1\leq a,b\leq r}\left(\Phi_4(a,b)-\Phi_4(b,a)\right)
\left(\partial\Phi_4(a,b)-\partial\Phi_4(b,a)\right).
\end{align*}
The embedding of $\g k_\C$ into the Lie algebra of $\bfG_n$ gives the realization of $\g k_\C$ as 
\[
\g k_\C:=\left\{
x\in\mathrm{Mat}_{2n\times 2n}(\C)\,:\,
x^TJ_n=-J_nx
\right\},
\]
where $J_n$ is as in \eqref{eqMtrxJ}, and $x^T$ denotes the transpose of the matrix $x$. We will denote the matrices in the 
standard basis of $\g{gl}_{2n}(\C)$ by $\sfE_{i,j}$'s.
The Killing form of $\g k_\C$ is $\kappa(x,y):=(4n+2)\tr(xy)$, and consequently, 
\begin{align*}
\Omega_\g k&=
\frac{1}{4(2n+1)}\sum_{1\leq p,q\leq n}\left(\sfE_{p,q}-\sfE_{q+n,p+n}\right)
\left(\sfE_{q,p}-\sfE_{p+n,q+n}\right)\\
&+
\frac{1}{4(2n+1)}
\sum_{1\leq p<q\leq n}\left(\sfE_{p,q+n}+\sfE_{q,p+n}\right)
\left(\sfE_{q+n,p}+\sfE_{p+n,q}\right)
+
\frac{1}{2(2n+1)}
\sum_{1\leq p\leq n}\sfE_{p,p+n}\sfE_{p+n,p}\\
&+
\frac{1}{4(2n+1)}
\sum_{1\leq p<q\leq n}\left(\sfE_{p+n,q}+\sfE_{q+n,p}\right)
\left(\sfE_{q,p+n}+\sfE_{p,q+n}\right)
+\frac{1}{2(2n+1)}\sum_{1\leq p\leq n}\sfE_{p+n,p}\sfE_{p,p+n}.
\end{align*}
As in Cases I and II,  
\begin{equation}
\label{CASgl(nH))}
\Omega_\g g=\sum_{1\leq p,q\leq 2n}\sfE_{p,q}\sfE_{q,p}.
\end{equation} The action of $\sfE_{p,q}$ on $\sP(W)$ is by the polarization operator 
$
-\sum_{i=1}^{2r}\xi_{q,s}
\partial_{\xi_{p,s}}
$.
Finally, the degree operator is 
$
\mathbf E=\sum_{i=1}^{2n}\sum_{j=1}^{2r}\xi_{i,j}
\partial_{\xi_{i,j}}
$.
From all of the above, and by a straightforward calculation, we obtain
$D^{(1)}=-8(2n+1)\Omega_{\g k}+2\Omega_{\g g}+2(2n-2r+1)\mathbf E$.

\end{document}